\newtheorem{theorem}{Theorem}[section]
\newtheorem{prop}[theorem]{Proposition}
\newtheorem{cor}[theorem]{Corollary}
\theoremstyle{definition}
\newtheorem{definition}[theorem]{Definition}
\newtheorem{example}[theorem]{Example}
\theoremstyle{remark}
\newtheorem{remark}[theorem]{Remark}
\numberwithin{equation}{section}
\newcommand{\NN}{{\mathbb N}}
\newcommand{\RR}{{\mathbb R}}
\newcommand{\out}[1]{\ }
\let\cal=\mathcal
\renewcommand{\phi}{\varphi}
\begin{document}
\title[On the existence of the biharmonic  Green kernels
]{On the existence of the biharmonic  Green  kernels
and  the adjoint  biharmonic functions}

\author[A. Aslimani]{Abderrahim Aslimani}
\address{University of Mohammed V
\\Department of Mathematics
\\Faculty of Sciences
\\P.B. 1014, Rabat
\\Morocco}
\email{slimonier.math@gmail.com}

\author[I. El Ghazi]{Imad El Ghazi}
\address{Universit\'e Mohammed V
\\D\'epartement de Math\'ematiques
\\Facult\'e des Sciences
\\B.P. 1014, Rabat
\\Morocco}
\email{el.imad.imad@gmail.com}

\author[M. El Kadiri]{Mohamed El Kadiri}
\email{elkadiri30@hotmail.com}

\author[S. Haddad]{Sabah Haddad}
\address{CRMEF, Rabat-Akkari, Morocco}
\email{sabahhaddad24@gmail.com}



\subjclass[2000]{31B30, 31C40, 31D05}
\keywords{Harmonic function,  Biharmonic function,
Harmonic space, Biharmonic space, Green kernel, Potential,
Biharmonic Green kernel, Adjoint harmonic function, Adjoint biharmonic function.}

\begin{abstract} We study the existence and the
regularity of the biharmonic Green kernel in a Brelot biharmonic space
whose associated harmonic spaces have  Green
kernels. We  show by  some examples that this kernel does not always exist.
We then introduce and study the adjoint of the given biharmonic space. This
study was initiated  by Smyrnelis, however, it seems that several results were
incomplete and we clarify them here.
\end{abstract}

\maketitle

\section{Introduction}\label{section1}

The study of the Laplace equation  $\Delta u=0$ on an open subset
$U$ of $\RR^n$  led to various axiomatic theories of harmonic
functions  (Brelot, Bauer, Constantinescu-Cornea and Doob) that
apply more generally  to partial differential equations of the
type $Lu=0$, where  $L$ is an elliptic or parabolic second-order linear
differential operator. Such theories do not apply to the equations of higher order as the
classical biharmonic equation $\Delta^2 u=\Delta(\Delta u)=0$ in
$U$ or, equivalently, to the system $\Delta u=-v$, $\Delta v=0$.
This led
Smyrnelis to develop and study in \cite{S1,S2}  an
axiomatic theory within a larger framework which apply  to the partial
differential equations $L_1L_2u=0,$ where $L_1$ and $L_2$ are two
elliptic or parabolic second-order linear differential operators
on an open subset of $\RR^n$ or of a Riemannian manifold. In
this theory, a biharmonic space is the data of a locally compact
space $\Omega$ with a sheaf $\cal H$ of vector spaces of pairs of
continuous real functions (called the biharmonic pairs)
satisfying certain axioms (cf. Section \ref{section2}). To such space are
associated two harmonic spaces $(\Omega,\cal H_1)$ and $(\Omega,\cal
H_2)$ (cf. \cite[Theorem 1.29, p. 57]{S1}). Then one introduces the
superharmonic and potential pairs and  extends to this
framework the notion of balayage,  Dirichlet problem and other
concepts of classical or axiomatic theories of harmonic functions. If $L_1$ and $L_2$ are
two elliptic or parabolic second-order linear differential operators
on an open subset $\Omega$ of $\RR^n$, then $\Omega$ endowed with the
sheaf $\cal H$ on $\Omega$ defined by $\cal H(U)=\{(u,v)\in \cal
C^2(U): L_1u=-v, \ L_2v=0\}$ for every open subset $U$ of $\Omega$,
is a biharmonic space in the sense of Smyrnelis.

Let  $(\Omega,{\cal H})$ be a strong biharmonic space (that is, there is
a ${\cal H}$-potential $(p,q)>0$ on $\Omega$) whose associated harmonic
spaces $(\Omega,\cal H_1)$ and $(\Omega,\cal H_2)$ are Brelot
spaces. We assume that the ${\cal H}_2$-potentials of the same harmonic support reduced to one point
(that is, $\cal H_2$-harmonic outside
a set reduced to a single point) are proportional. According to \cite[Proposition 22.1, p. 507]{He}
there exists a continuous function $y\mapsto q_y$ from
$\Omega$ into a compact base $B$ of the  cone $\cal S_2^+(\Omega)$ of
 ${\cal H}_2$-superharmonic functions $\ge 0$ on $\Omega$ endowed with the
topology of R-M. Herv\'e cf. \cite[Chap. IV]{He}. The  function $G$ defined on
$\Omega^2=\Omega\times \Omega$ by $G(x,y)=q_y(x)$ is l.s.c. on $\Omega^2$
 and continuous outside the diagonal of $\Omega^2$ ($G$ will be called a (or the)
Green kernel of the harmonic space$(\Omega,\cal H_2)$). Under the assumption $(1,1)\in {\cal
H}^*(\Omega)$ Smyrnelis affirms in \cite{S4} that, for every $y\in \Omega$, the pure hyperharmonic
function $p'_y$ of order 2 associated with $q_y$ (see Section \ref{section7}) is $\cal
H_1$-superharmonic function and that the  function $y\mapsto p'_y$
is continuous from $\Omega$ into $\cal S_1^+(\Omega)$. This result was
used by Smyrnelis to define in  \cite{S5} adjoint biharmonic functions. It is also stated in a
remark at the end of  \cite{S4}, that there is a
biharmonic pair $(h,k)$ such that $h>0$ and $k>0$ on
$\Omega$. By several examples stated in Section
\ref{section7} below, we  see that these statements are not always true, and therefore the results of
\cite{S5} are incomplete without additional assumptions
ensuring the existence of a biharmonic  Green kernel (or of a second Green
kernel). In Section \ref{section8} of
the present work we also consider  adjoint biharmonic functions. We then define
the adjoint biharmonic space $(\Omega,{^*{\cal H}})$ of $(\Omega, \cal H)$ under the following
supplementary conditions:\\

{\bf 1.} For any $y\in \Omega$, the pure hyperharmonic function $p'_y$ of
order two associated with
$q_y$
is $\cal H_1$-superharmonic,

{\bf 2.} For any $x\in \Omega$, the function $y\mapsto p'_y(x)$ is continuous on $\Omega\smallsetminus\{x\}$.\\

\noindent
It is a biharmonic space whose associated
harmonic spaces are  respectively the adjoint harmonic spaces
$(\Omega,{^*\cal H_2})$ and $(\Omega,{^*\cal H_1})$ of the harmonic
spaces $(\Omega,\cal H_2)$ and $(\Omega,\cal H_1)$ respectively, and
in which the pure hyperharmonic function of order 2 associated with
$p_y^*$ is the function ${p'_y}^*=p'_\centerdot(y)$ for every $y\in \Omega$,
where $y\mapsto p_y$ is a continuous map from $\Omega$ into $\cal S_1^+(\Omega)$
which associates to any $y\in \Omega$ an
$\cal H_1$-potential with harmonic support $\{y\}$.
We also show that, conversely, if there is a  biharmonic space
$(\Omega, \cal G)$ whose associated biharmonic spaces  are
$(\Omega,{^*\cal H_2})$ and $(\Omega, {^*\cal H_1})$ respectively, and where the
pure hyperharmonic function  of order 2 associated with $p_y^*$ is the
function ${p'_y}^*=p'_\centerdot(y)$ for every $y\in \Omega$
(where $p^*_y$ is the $\cal H_1$-adjoint superhharmonic defined by $p_y(x)=p_x(y)$), then
the above condition {\bf 2} is satisfied and the coupling kernel
of $(\Omega,\cal G)$ is equal to the adjoint
kernel $V^*$ of the coupling kernel $V$ of $(\Omega,\cal G)$
(see definition of $V^*$ in Section \ref{section8}). We also show that the (function) kernel
$H$ defined on $\Omega^2$ by $H(x,y)=p'_y(x)$, called the biharmonic
Green kernel (or the second Green kernel) of the  biharmonic space $(\Omega,\cal H)$, is regular
in the sense that it is l.s.c. on $\Omega^2$ and continuous outside the diagonal of  $\Omega^2$.

In this paper we also give results in the axiomatic theory of biharmonic spaces
and clarify some incomplete results of Smyrnelis. More precisely, we shall prove the following:

1. In a biharmonc space associated with Green harmonic space the biharmonic Green kernel may not exist.

2. The existence of biharmonic space associated with two harmonic spaces and
a given coupling kernel.

3. Contrarily to the case of harmonic space, a biharmonic pair $>0$ (or a strict biharmonic function $>0$) in
a strong biharmonic space may not exist.

4. A definition of the notion of biharmonic adjoint of given Green biharmonic space is possible.

5. The biharmonic Green kernel in a biharmonic space, when it exists, is regular under some simple conditions.\\

{\bf Notations}. Let  $\Omega$ be a locally compact space with
countable base. If $A$ is a subset of $\Omega$, we denote by
$\overline A$ the closure of  $A$ in the Alexandroff  compactification
$\overline \Omega$ of $\Omega$, and by $\partial A$ the boundary of
$A$ in $\overline \Omega$. By function on $A$ we mean a function
on $A$ with values in $[-\infty,+\infty]$. Let $U$ be an open subset of
$\Omega$, the set of Borel functions  on $U$ is denoted by
$\cal B(U)$ and, if $f$ is a function defined on $U$, we denote by
$\widehat f$ the l.s.c. regularization of $f$. Recall that $\widehat f$
is defined by $\widehat f(x)=\lim\inf_{y\to x} f(y)$ for every $x\in
U$ and that $\widehat f$  is the greatest l.s.c. of $f$
on $U$. We denote by $\cal C_c(\Omega)$ the (real) vector space
of continuous and finite functions on $\Omega$ with compact support.
For every continuous function $\varphi$ on $\Omega$,
we denote by $S(\varphi)$ the support of $\varphi$. The order on the
set of  pairs of functions on a subset $A$ of $\Omega$ is defined by
$$(f_1,g_1)\ge (f_2,g_2) \iff f_1\ge f_2 \ {\rm et} \
f_2\ge g_2 \ {\rm on} \ A.$$ For the sake of simplicity, we shall
write $(f,g)\ge 0$ for  $(f,g)\ge (0,0)$ and $(f,g)>0$ on $A$ for $f
\ {\rm and} \ g>0$ on $A$.

If $\cal A(E)$ is a set of functions defined on a set $E$, we denote by
$\cal A^+(E)$ the set of nonnegative functions  of $ \cal A(E)$.

\section{Preliminaries and notations}\label{section2}

Throughout this article,  $\Omega$ is a locally compact space
with countable base and   ${\cal U}$ (resp. ${\cal U}_c$)
is the  set of open subsets (resp. the set of relatively
compact open subsets) of $\Omega$. Let ${\cal H}$ be a
map which  associates to each  $U\in {\cal U}$  a
vector space of pairs of continuous real functions on $U$, compatible
in the sense that if $(u,v)\in {\cal H}(U)$ and $u=0$ on an open
subset $\omega \subset U$ then $v=0$ on $\omega$. The pairs of
${\cal H}(U)$
are termed biharmonic on $U$.\\

{\bf Axiom 1.} {\it ${\cal H}$ is a sheaf.}\\

This means that

1. If $U$ and $V$ are both open subsets of $\Omega$ such that
$U\subset V$ and if $(u,v)\in {\cal H}(V)$ then $(u_{|U},v_{|U})\in
{\cal H}(U)$.

2. If $(U_i)$ is a family of open subsets of  $\Omega$ and if
$(u,v)$ is a pair of real functions on $U=\bigcup_iU_i$  such that, for
every $i$, $(u_{|U_i},v_{|U_i})\in{\cal H}(U_i)$,
then $(u,v)\in {\cal H}(U)$.\\

An open subset $\omega \in {\cal U}_c$ is said to be ${\cal
H}$-regular, or just regular, if for every pair $(f,g)$  of real
continuous functions on ${\partial \omega}$, there exists a pair
$(u,v)\in {\cal H}(\omega)$ such that

i) $\lim _{x\to y}u(x)=f(y)$ and $\lim_{x\to y} v(x)=g(y)$ for every
$y\in {\partial \omega}$.

ii) If $(f,g)\ge (0,0)$, then $(u,v)\ge (0,0)$.

Such pair is then unique. It is denoted by
$H_\omega(f,g)=(H_\omega^1(f,g),H_\omega^2(f,g))$ and  called the
solution of the Riquier problem in $\omega$ for the boundary data
 $(f,g)$ on $\partial \omega$. If $\omega\in {\cal U}_c$ is ${\cal
H}$-regular, then for any $x\in \omega$, there exists a triple
$(\mu_x^\omega,\nu_x^\omega,\lambda_x^\omega)$ of
nonnegative Radon measures
on ${\partial \omega}$, called the triple of   biharmonic measures
of $\omega$ at the point $x$,  such that

$$H_\omega^1(f,g)(x)=\int f(y)d\mu_x^\omega(y)+\int g(y)d\nu_x^\omega(y)$$
and
$$H_\omega^2(f,g)(x)=\int g(y)d\lambda_x^\omega(y)$$
for any pair $(f,g)\in {\cal C}({\partial \omega})\times {\cal C}({\partial \omega})$.\\

{\bf Axiom 2}. {\it The regular open sets  form a base of the topology of $\Omega$.}\\

We denote by ${\cal U}_r$ the set of all $\cal H$-regular relatively
compact open subsets of $\Omega$. A pair $(u,v)$ of l.s.c. functions on $U\in
{\cal U}$ with values in $]-\infty,+\infty]$ is said to be ${\cal
H}$-hyperharmonic (on $U$), or just hyperharmonic when there is no risk of
confusion, if for any  $\omega\in {\cal U}_r$, $\omega\subset
{\overline \omega}\subset U$, and any $x\in \omega$, we have
$$\int^* ud\mu_x^\omega+\int^* vd\nu_x^\omega\le u(x)$$ and
$$\int^* vd\lambda_x^\omega \le v(x).$$

The set of ${\cal H}$-hyperharmonic pairs on an open subset $U$ of
$\Omega$ is denoted by ${\cal H}^*(U)$. It is easy to check that
${\cal H}^*(U)$ and ${\cal H}^{* +}(U)$ are convex cones.

For every  $U\in {\cal U}$ we set
$${\cal H}_1^*(U)=\{u: (u,0)\in {\cal H}^*(U)\}$$
and
$${\cal H}_2^*(U)=\{v: (+\infty,v)\in {\cal H}^*(U)\}.$$

Then ${\cal H}_1^*$ and ${\cal H}_2^*$ are two sheaves of cones of
l.s.c. functions.\\

Then we define  on $\Omega$ two  sheaves of vector spaces of
continuous real  functions  by putting $\cal H_1(U)=\cal
H_1^*(U)\cap(-\cal H_1^*(U))$ and  $\cal H_2(U)=\cal
H_2^*(U)\cap(-\cal H_2^*(U))$
for any open subset $U$ of $\Omega$.\\

{\bf Axiom 3.} {\it $(\Omega, {\cal H}_1)$ and $(\Omega,{\cal H}_2)$
are Bauer harmonic spaces.}\\

We say that the pair $(\Omega,\cal H)$ or that   $\Omega$, provided
with the sheaf ${\cal H}$, is a biharmonic space if the axioms 1, 2 and 3 are satisfied.\\

\begin{example}\label{example2.1} Let $N$ be an integer $\ge 1$, and let $L_1$, $L_2$
two elliptic or parabolic second-order linear differential operators, on an (nonempty) open subset $\Omega$  of $\RR^N$.
Then $\Omega$ endowed with the sheaf ${\cal H}$ defined by
$${\cal H}(U)=\{(h,k)\in {\cal C}^2(U)\times {\cal C}^2(U):
L_1h=-k, L_2k=0\}$$ for every open subset $U$ of $\Omega$, is a
biharmonic space.
\end{example}

A  biharmonic space  $(\Omega,\cal H)$ is  said to be a Brelot
biharmonic space if $\Omega$ is non-compact, connected and locally
connected and if the associated harmonic spaces  $(\Omega,\cal H_1)$
and $(\Omega,\cal H_2)$ are Brelot harmonic spaces.

The probabilistic aspect of biharmonic spaces was studied by Bouleau
in \cite{B}. Boboc and Bucur  \cite{BB} have shown that the
nonnegative hyperharmonic  pairs can be identified with the excessive
functions of a triangular resolvent on the space $\Omega \oplus
\Omega$, and then the theory of biharmonic spaces
reduces to that of $H$-cones. For more details on the theory
of biharmonic spaces, we refer to \cite{S1} and  \cite{S2}.\\

Let $U\in {\cal U}$ and $(u,v)\in {\cal H}^*(U)$. If $u$ is finite
on a dense subset in $U$ (and hence $v$ also is finite in a dense subset
in $U$), we say that the pair $(u,v)$ is ${\cal
H}$-\textit{superharmonic} or just \textit{superharmonic} on $U$. We denote by ${\cal
S}(U)$ the set of all superharmonic pairs on  $U$. Then ${\cal
S}(U)$, ${\cal S}^+(U)$ and $\cal H^+(U)$ are  convex cones.

Let $(u,v)$ be a $\cal H$-superharmonic pair on $\Omega$. It follows from
Axiom 1 that there is a greatest open subset $U$ of $\Omega$ on which $(u,v)$ is $\cal H$-harmonic.
The complement set of $U$ is called the \textit{biharmonic support} of $(u,v)$.

A pair $(p,q)\in {\cal S}^+(\Omega)$ is said to be a $\cal
H$-\textit{potential} (or just a \textit{potential} if there is no risk of confusion) if
$$\forall (h,k)\in {\cal H}^+(\Omega); (h,k)\le (p,q)\Rightarrow
h=k=0.$$

It is easy to see that if $(u,v)\in \cal S^+(\Omega)$ and if $u$ is
a $\cal H_1$-potential and $v$ is a $\cal H_2$-potential, then
$(u,v)$ is a  $\cal H$-potential. Conversely, if $(u,v)$ is a $\cal
H$-potential, then $u$ is a $\cal H_1$-potential and, as it will be
seen in the next section (see Remark \ref{cor3.9}), $v$ is
a $\cal H_2$-potential.

A biharmonic space $(\Omega,\cal H)$ is said to be strong, if there
exists a $\cal H$-potential $(p,q)>0$ on $\Omega$.
For $L_1=L_2=\Delta$, the Laplace operator, in the
Example \ref{example2.1},  the space $(\RR^N,\cal H)$ is strong if and only if
$N\ge 5$ (see \cite{EK1}). Any  relatively compact open subset $\omega$
of a biharmonic space, with  the biharmonic sheaf restricted to
$\omega$, is strong.

In order to simplify some statements in the next sections we introduce the following definition:

\begin{definition}
An harmonic space $(\Omega,\cal K)$ is called a Green harmonic space
if $(\Omega,\cal K)$ is a Brelot harmonic space satisfying the hypothesis of uniqueness,
that is, the potentials on $\Omega$
with the same harmonic support reduced to one point are
proportional.  A biharmonic space $(\Omega,\cal H)$ is called Green biharmonic space if its associated harmonic
spaces $(\Omega,\cal H_1)$ and $(\Omega,\cal H_2)$ are Green harmonic spaces.
\end{definition}

\section{Pure hyperharmonic pairs and kernels  associated with a  biharmonic space}\label{section3}

Let $(\Omega,{\cal H})$  be a biharmonic space and  $v$ a
nonnegative ${\cal H}_2$-hyperharmonic function on $\Omega$. The set
$$\cal U_0(v)=\{u\in \cal H_1^{*+}(\Omega): (u,v)\in \cal H_+^*(\Omega)\}$$
is not empty because the constant function $u\equiv +\infty$
belongs to $\cal U_0(v)$. Then, according to \cite[Lemma 11.6, p.
36]{S2} the function $u_0=\widehat {\inf} \ \cal U_0(v)\in {\cal
H}_1^{*+}(\Omega)$. It is the smallest nonnegative ${\cal
H}_1$-hyperharmonic function $u$ on $\Omega$ such that $(u,v)\in {\cal
H}^{*+}(\Omega)$. This function is called the \textit{pure hyperharmonic
function  of order} 2, or simply
the pure $\cal H_1$-hyperharmonic, associated with $v$. A pair $(u,v)\in {\cal
H}^{*+}(\Omega)$ is said to be \textit{pure} if $u$ is the pure hyperharmonic
function of order 2 associated with $v$.

The essential properties of
the pure  hyperharmonic pairs  can be found in \cite{B} and \cite{B2}.

\begin{remark}\label{remark3.1}
If the pair $(u,v)\in \cal S^+(\Omega)$ is pure, then $u$ is a $\cal
H_1$-potential. Indeed, $u$ is nonnegative $\cal H_1$-superharmonic.
Let $h$  be the $\cal H_1$-harmonic part in the  Riesz decomposition of
$u$, then the pair $(u-h,v)$ is nonnegative $\cal H$-superharmonic
in $\Omega$, whence $u\le u-h$ and therefore $h=0$, which clearly
proves that $u$ is a $\cal H_1$-potential.
\end{remark}

\begin{remark}\label{remark3.2}
Let $(p,q)$ be a pure  $\cal H$-superharmonic pair on $\Omega$ with $q>0$,
then $p$ is a strict potential. In fact, for any $\omega\in \cal U_r$, we have $p\ge \int
pd\mu_\centerdot^\omega+\int qd\nu_\centerdot^\omega>\int pd\mu_\centerdot^\omega$. Thus $p$ is
strict.
\end{remark}

Recall that a kernel  $V$ on a measurable space
$(E,\cal E)$ is a function $V\ge 0$ defined on  $E\times \cal E$
such that

1. For every $A\in \cal E$, the function $x\mapsto V(x,A)$ is
$\cal E$-measurable.

2. For every $x\in E$, the set function
$A\mapsto V(x,A)$ is a (nonnegative) measure  on $(E,\cal E)$.

Let  $V$ be a kernel on $(E,\cal E)$, then for any nonnegative
$\cal E$-measurable function $f$ on $E$, we denote by $Vf$ or $V(f)$ the function on $E$
defined by
$$Vf(x)=\int f(y)V(x,dy), \ \forall x\in E,$$
where the  integral is understood to be taken with respect to the measure $V(x,\centerdot)$.

Let $\mu$ be a measure on $(E,\cal E)$ and $f$ a nonnegative $\cal E\otimes \cal E$-measurable
function on
$E\times E$, then the function $(x,A)\mapsto \int_Af(x,y)d\mu(y)$ on $E\times \cal E$ is a
kernel on $(E,\cal E)$.

If $E$ is a topological space and $\cal E$ is the Borel
$\sigma$-algebra
of $E$, then a kernel on $(E,\cal E)$ is called a Borel kernel on  $E$.\\

Recall the following result of Bouleau \cite[Theorem 2.8, p.
208]{B} that will be very useful in the rest this paper:

\begin{theorem}\label{thm3.7}
Let $(\Omega,\cal H)$ be a strong biharmonic space.
There exists a unique Borel kernel $V$ on $\Omega$
such that

{\rm 1}. For any  function  $\varphi\in \cal C_c^+(\Omega)$,
the function $V\varphi$ is a $\cal H_1$-potential on $\Omega$, and
$\cal H_1$-harmonic on $\Omega\smallsetminus S(\varphi)$.

{\rm 2}. For every function $v\in \cal H_2^{*+}(\Omega)$, $Vv$ is the pure
 hyperharmonic function of order 2 associated with $v$.
\end{theorem}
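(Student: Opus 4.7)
The plan is to first use property 2 as a definition on the $\cal H_2$-hyperharmonic cone, then extract a Borel kernel via a Riesz-type representation, and finally verify property 1 together with the uniqueness.

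\textbf{Step 1: Operator on the cone.} For each $v\in\cal H_2^{*+}(\Omega)$ let $\widetilde V v$ denote the pure $\cal H_1$-hyperharmonic function of order 2 associated with $v$, which is well defined by the preceding discussion and, thanks to the strict $\cal H$-potential pair $(p,q)>0$, is not identically $+\infty$ on functions dominated by a multiple of $q$. Using the convex cone structure of $\cal H^{*+}(\Omega)$, the infimum characterization of the pure function, and \cite[Lemma 11.6]{S2}, one shows that $\widetilde V$ is additive, positively homogeneous, order preserving, and continuous for monotone increasing sequences on $\cal H_2^{*+}(\Omega)$.

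\textbf{Step 2: Extract the kernel.} Because $(\Omega,\cal H_2)$ inherits a positive $\cal H_2$-potential $q$ from the biharmonic pair, it is a strong Bauer harmonic space and admits a potential kernel $W$ sending each $\varphi\in\cal C_c^+(\Omega)$ to an $\cal H_2$-potential $W\varphi$ that is $\cal H_2$-harmonic on $\Omega\smallsetminus S(\varphi)$. For fixed $x\in\Omega$, the functional $\varphi\mapsto\widetilde V(W\varphi)(x)$ is positive and linear on $\cal C_c^+(\Omega)$, hence by the Riesz representation theorem equals $\int\varphi\,dV(x,\cdot)$ for a unique Radon measure $V(x,\cdot)$ on $\Omega$. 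Borel measurability of $x\mapsto V(x,A)$ follows from lower semicontinuity of $x\mapsto V\varphi(x)=\widetilde V(W\varphi)(x)$ (an $\cal H_1$-hyperharmonic function) together with a standard monotone class argument, so $V$ is indeed a Borel kernel.

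\textbf{Step 3: Verify the two properties.} Property 1 is built in: $V\varphi=\widetilde V(W\varphi)$ is pure, hence an $\cal H_1$-potential by Remark~\ref{remark3.1}, and the pair $(V\varphi,W\varphi)$ being $\cal H$-hyperharmonic on $\Omega\smallsetminus S(\varphi)$ (where $W\varphi$ is $\cal H_2$-harmonic) forces $V\varphi$ to be $\cal H_1$-harmonic there. Property 2 follows by first establishing the Fubini-type identity $\int W\varphi\,dV(x,\cdot)=\widetilde V(W\varphi)(x)$ for $\varphi\in\cal C_c^+(\Omega)$ and then extending to arbitrary $v\in\cal H_2^{*+}(\Omega)$ by monotone approximation through $\cal H_2$-potentials of the form $W\varphi_n$, invoking Step 1. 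Uniqueness is immediate from the uniqueness in the Riesz representation, since the values $V\varphi(x)$ determine $V(x,\cdot)$.

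The main obstacle is precisely the Fubini-type identity in Step 3: $V(x,\cdot)$ is constructed to represent $\widetilde V\circ W$, but we need it to represent $\widetilde V$ directly on $\cal H_2$-hyperharmonic functions and not merely on the image of $W$. This compatibility between the potential kernel of $\cal H_2$ and the pure function construction is the core content of Bouleau's theorem \cite{B}; it rests on the $H$-cone structure of $\cal H^{*+}(\Omega)$ noted by Boboc-Bucur \cite{BB}, and is the step where the biharmonic axioms are genuinely used rather than just the two harmonic structures separately.
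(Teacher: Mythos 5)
First, note that the paper does not prove this statement at all: it is quoted verbatim from Bouleau \cite[Theorem 2.8, p.~208]{B}, so there is no in-paper proof to compare against. Judged on its own terms, your proposal has a genuine structural flaw located in Step 2. The Riesz representation there produces, for each $x$, the unique Radon measure $V(x,\centerdot)$ with $\int\varphi\,dV(x,\centerdot)=\widetilde V(W\varphi)(x)$; in other words you have built the \emph{composite} kernel $\widetilde V\circ W$, not the coupling kernel. The ``Fubini-type identity'' $\int W\varphi\,dV(x,\centerdot)=\widetilde V(W\varphi)(x)$ that Step 3 then requires is not merely the hard step you defer to Bouleau --- for the measure you have actually constructed it is \emph{false}, since that measure satisfies $\int\varphi\,dV(x,\centerdot)=\widetilde V(W\varphi)(x)$ instead, and the two would coincide only if $V(x,\centerdot)$ were invariant under the adjoint of $W$, which fails already in the explicit $\RR^N$ example of Section \ref{section7}. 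The verification of Property 1 breaks at the same point: purity of $(V\varphi,W\varphi)$ with $W\varphi$ being $\cal H_2$-harmonic off $S(\varphi)$ gives, by Proposition \ref{prop3.8}, that the \emph{pair} is $\cal H$-harmonic there, i.e.\ $V\varphi=\int V\varphi\,d\mu_\centerdot^\omega+\int W\varphi\,d\nu_\centerdot^\omega$; since $W\varphi>0$ off its support in general, this makes $V\varphi$ strictly $\cal H_1$-superharmonic, not $\cal H_1$-harmonic, on $\Omega\smallsetminus S(\varphi)$. The uniqueness claim is also too quick: Properties 1 and 2 do not directly determine $V\varphi(x)$ for $\varphi\in\cal C_c^+(\Omega)$, so Riesz uniqueness alone does not apply; one needs the uniqueness theorem of \cite[Exercice 8.2.3]{CC} or \cite{Me} for kernels normalized on a strict potential.

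The construction that actually works, and that the paper relies on afterwards (see the discussion preceding Proposition \ref{prop3.4} and the proof of Theorem \ref{thm3.16}), runs the other way around: take a pure, finite, continuous $\cal H$-potential $(p,q)>0$; by Remark \ref{remark3.2} the function $p=\widetilde V(q)$ is a \emph{strict} $\cal H_1$-potential; apply the representation theorem of Constantinescu--Cornea/Meyer to obtain the unique Borel kernel $W'$ with $W'1=p$ and $W'\varphi$ an $\cal H_1$-potential, $\cal H_1$-harmonic off $S(\varphi)$; then set $Vf=W'(f/q)$. Property 1 is then immediate from the properties of $W'$, and the substantive content of Bouleau's theorem --- the step your Step 1 gestures at but which must be proved, using additivity and monotone continuity of $v\mapsto\widetilde V(v)$ on the $H$-cone $\cal H^{*+}(\Omega)$ together with $Vq=W'1=p=\widetilde V(q)$ --- is that $W'(v/q)$ is the pure hyperharmonic function of order $2$ associated with $v$ for \emph{every} $v\in\cal H_2^{*+}(\Omega)$. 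Your instinct that this compatibility is the crux is right, but the kernel it must be proved for is $W'(\,\centerdot/q)$, not the one your Step 2 delivers.
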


The kernel $V$ will be called  the \textit{coupling kernel}  of the harmonic
spaces $(\Omega,\cal H_1)$ and $(\Omega, \cal H_2)$ in the
biharmonic space $(\Omega,\cal H)$ or simply
the \textit{coupling kernel} of $(\Omega,\cal H)$.\\

Let $(p,q)$ be a  pure finite and continuous ${\cal H}$-potential
$>0$ on $\Omega$. We have seen  in the Remark \ref{remark3.2} that  the
potential $p$ is strict. Then there exists by
\cite[Exercice 8.2.3, p. 198]{CC} (or \cite[Theorem 2, p.
362]{Me} for Brelot harmonic spaces) a unique Borel kernel $W$ on  $\Omega$ such that

1. For any  function  $\varphi\in \cal C_c^+(\Omega)$,
the function $W\varphi$ is an $\cal H_1$-potential on $\Omega$, and
$\cal H_1$-harmonic on $\Omega\smallsetminus S(\varphi)$.

2. $W1=p.$

Let $V$ be the coupling kernel of $(\Omega,\cal H)$,
and consider the Borel kernel
$V'$ on $\Omega$ defined by $V'f=V(qf)$ for any Borel function $f\ge 0$ on
$\Omega.$ Then $V'$ satisfies the above two properties 1. and 2. of $W$.
Hence, according to the uniqueness of the kernel $W$, we have
$W=V'$. From this result we deduce the following:

\begin{prop}\label{prop3.4}
For every  $\cal H_2$-hyperharmonic function $v\ge 0$ on $\Omega$,
the pure $\cal H_2$-hyperharmonic function associated with $v$ is
$u=W(\frac{v}{q})$.
\end{prop}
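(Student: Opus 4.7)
The plan is to read the statement off as a direct consequence of the identification $W=V'$ already established in the paragraph immediately preceding the proposition. First I would invoke part~2 of Theorem~\ref{thm3.7}: for any $v\in \cal H_2^{*+}(\Omega)$, the pure hyperharmonic function of order~2 associated with $v$ equals $Vv$, where $V$ is the coupling kernel of $(\Omega,\cal H)$. The whole game is therefore to rewrite $Vv$ as $W(v/q)$.

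For this I would verify (as the preceding discussion essentially does) that the Borel kernel $V'$ defined by $V'f=V(qf)$ satisfies the two defining properties of $W$. Since $q$ is continuous and finite on $\Omega$, for any $\varphi\in \cal C_c^+(\Omega)$ the product $q\varphi$ again lies in $\cal C_c^+(\Omega)$ with $S(q\varphi)\subset S(\varphi)$, so Theorem~\ref{thm3.7}(1) gives that $V'\varphi=V(q\varphi)$ is an $\cal H_1$-potential on $\Omega$ and is $\cal H_1$-harmonic on $\Omega\smallsetminus S(\varphi)$. Moreover, since $(p,q)$ is pure, Theorem~\ref{thm3.7}(2) yields $p=Vq=V'1$. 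Uniqueness of the kernel $W$ then forces $W=V'$.

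Finally, the hypothesis $(p,q)>0$ ensures that $q>0$ pointwise, so $v/q$ is a well-defined nonnegative Borel function on $\Omega$ (with $(+\infty)/q(x)=+\infty$ where $v(x)=+\infty$), and the substitution
$$W\Bigl(\frac{v}{q}\Bigr)=V'\Bigl(\frac{v}{q}\Bigr)=V\Bigl(q\cdot\frac{v}{q}\Bigr)=Vv$$
concludes the proof. The only point requiring a moment's care is the extended-arithmetic identity $q\cdot(v/q)=v$ on the set where $v=+\infty$; this is the standard convention under which the Borel kernel $V$ acts on $[0,+\infty]$-valued functions, and beyond this bookkeeping I do not anticipate any real obstacle — the proposition is essentially an immediate corollary of $W=V'$ together with Theorem~\ref{thm3.7}(2).
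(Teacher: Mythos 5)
Your proposal is correct and follows exactly the route the paper intends: the paragraph preceding the proposition establishes $W=V'$ with $V'f=V(qf)$, and the proposition is then read off from Theorem~\ref{thm3.7}(2) via the substitution $W(v/q)=V(q\cdot v/q)=Vv$, just as you do. Your verification of the two defining properties of $W$ for $V'$ (using that $q$ is finite, continuous and $>0$, and that $p=Vq$ because $(p,q)$ is pure) is precisely the paper's argument, spelled out in slightly more detail.
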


\begin{prop}\label{prop3.5} Let $(u,v)\in \cal S_+^*(\Omega)$ and let $u_0$ be the pure
$\cal H_1$-hyperharmonic function of order 2 associated with $v$. Then there
exists a nonnegative $\cal H_1$-hyperharmonic function $u_1$ in
$\Omega$ such that $u=u_0+u_1$.
\end{prop}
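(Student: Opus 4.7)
The plan is to exhibit the desired function as $u_1 := u - u_0$, extended by lower-semicontinuous regularization across the set $\{u_0 = +\infty\}$, and to verify that it has the stated properties. The argument breaks into three stages.

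I begin with the preliminary observations. Since $(u,v) \in \cal S^+(\Omega) \subset \cal H^{*+}(\Omega)$, the function $u$ lies in the family $\cal U_0(v)$, so by the very definition of the pure hyperharmonic function, $u_0 = \widehat{\inf}\,\cal U_0(v) \le u$. As $u$ is finite on a dense subset of $\Omega$, so is $u_0$; thus $(u_0, v) \in \cal S^+(\Omega)$ is a pure superharmonic pair, and by Remark \ref{remark3.1} the function $u_0$ is a $\cal H_1$-potential. A direct reading of the hyperharmonic-pair inequality for $(u,v)$ with $v \ge 0$ yields $\int^* u \, d\mu_x^\omega \le u(x)$ for every regular $\omega$ with $\bar\omega \subset \Omega$, so $u$ is $\cal H_1$-hyperharmonic, and being finite on a dense set, $\cal H_1$-superharmonic; the same argument applies to $u_0$.

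Next, I define $u_1(x) := u(x) - u_0(x)$ on the dense set $\{u_0 < +\infty\}$ (where it is well defined and $\ge 0$), and extend it to all of $\Omega$ by l.s.c. regularization. The identity $u_0 + u_1 = u$ then holds everywhere: on $\{u_0 < +\infty\}$ by construction, and on the complementary polar set both sides equal $+\infty$, since $u \ge u_0 = +\infty$ there.

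The core step is to prove $u_1 \in \cal H_1^{*+}(\Omega)$. For this I would invoke the standard axiomatic potential-theoretic lemma that the difference of a $\cal H_1$-superharmonic function and a dominated $\cal H_1$-potential, appropriately extended l.s.c., is $\cal H_1$-hyperharmonic; applied to $s = u$ and $p = u_0$, this completes the proof. The main obstacle is that a naive attempt to subtract the two hyperharmonic-pair inequalities for $(u,v)$ and $(u_0, v)$ does not work: both inequalities feature the same term $\int^* v \, d\nu_x^\omega$ with the same sign, so one cannot be converted into a lower bound for the other, and the desired bound on $\int^*(u - u_0) \, d\mu_x^\omega$ does not follow. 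The correct route is a reduced-function/balayage identity specific to potentials---for a $\cal H_1$-potential $p$, the integral $\int p \, d\mu_x^\omega$ equals the reduced function of $p$ on $\Omega \smallsetminus \omega$ at $x \in \omega$---which, combined with the analogous Riesz-type decomposition of $u$, yields the hyperharmonic inequality for $u_1$ and finishes the proof.
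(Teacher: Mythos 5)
There is a genuine gap at the core step. The ``standard lemma'' you invoke --- that the difference of an $\cal H_1$-superharmonic function and a dominated $\cal H_1$-potential is, after l.s.c.\ extension, $\cal H_1$-hyperharmonic --- is false. Already the case $v=0$ (where $u_0=0$ and the proposition is trivial) shows the problem: in $\RR^3$ with the Laplacian, take $u$ the Newtonian potential of a point mass at the origin and $p$ the Newtonian potential of the uniform measure on a small ball, scaled so that $p\le u$; then $u-p$ is strictly subharmonic off the origin, hence not hyperharmonic. The difference of two superharmonic functions is only ``delta-superharmonic'' in general; domination of the subtracted potential does not order the Riesz measures. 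Your fallback --- the balayage identity $\int p\,d\mu_x^\omega=R_p^{1,\Omega\smallsetminus\omega}(x)$ combined with ``the analogous Riesz-type decomposition of $u$'' --- is a gesture rather than an argument: it never produces the inequality $\int^*(u-u_0)\,d\mu_x^\omega\le (u-u_0)(x)$, and, decisively, nothing in your proof uses the one property of $u_0$ that makes the statement true, namely its minimality in the class $\cal U_0(v)=\{w\in\cal H_1^{*+}(\Omega):(w,v)\in\cal H^{*+}(\Omega)\}$.

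That minimality is exactly the engine of the paper's proof (first for finite $(u,v)$): on a regular $\omega$ one sets $w=u-\int u\,d\mu_\centerdot^\omega+\int u_0\,d\mu_\centerdot^\omega$, so that $(w,v)$ is still $\cal H$-superharmonic on $\omega$ (only an $\cal H_1$-harmonic correction was added to the first component) and $\liminf_{x\to z}w(x)\ge u_0(z)$ for $z\in\partial\omega$; gluing $w\wedge u_0$ on $\omega$ with $u_0$ on $\Omega\smallsetminus\omega$ yields a pair $(u_2,v)\in\cal H^{*+}(\Omega)$, whence $u_2\ge u_0$ by minimality of the pure function, i.e.\ $w\ge u_0$ on $\omega$, which is precisely the mean-value inequality $\int(u-u_0)\,d\mu_x^\omega\le(u-u_0)(x)$. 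The general case then follows by exhausting $(u,v)$ by an increasing sequence of finite superharmonic pairs, a reduction your write-up also omits. Your preliminary observations ($u_0\le u$, $u_0$ a potential, $u$ and $u_0$ superharmonic) are correct, but the proof is missing its central step.
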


\begin{proof} Since $(u,v)$ is $\cal H$-superharmonic
then $u$ and $u_0$ are $\cal H_1$-superharmonic functions.
Suppose first that $u$ and $v$ are finite and
let $\omega\in \cal U_r$ be a $\cal H$-regular open  subset  and let $w$ be the
function defined on $\omega$ by $w=u-\int ud\mu_\centerdot^\omega+\int
u_0d\mu_\centerdot^\omega$. Then the pair $(w,v)$ is $\cal H$-superharmonic
on $\omega$. Furthermore we have $\lim\inf_{x\in \omega, x\to z}
w(x)\ge u_0(z)$ for every $z\in \partial \omega$. It therefore follows from
\cite[Proposition 1.21, p. 53]{S1} that the pair $(u_2,v)$ is $\cal
H$-superharmonic in $\Omega$, where $u_2$ is the function defined on
$\Omega$ by
$$u_2= \left\{\begin{array}{c}
w\wedge u_0 \text{ on } \omega\\
u_0 \text { on } \Omega \smallsetminus \omega,
\end{array}
\right.$$ hence $u_2\ge u_0$ and therefore $w\ge u_0$ in $\omega$. Then it follows that the
nonnegative function $u-u_0$ is $\cal H_1$-hyperharmonic
function $u_1$ on $\Omega$ and we have $u=u_0+u_1$.
For general $\cal H$-hyperharmonic pair $(u,v)$, there exists an increasing sequence
$((u_n,v_n))$ of finite $\cal H$-superharmonic  pairs such that $(u,v)=\sup_n(u_n,v_n)$.
By the above case, for each $n$ there exists a $\cal H_1$-superharmonic $t_n$ and a pure
pair $(u'_n,v_n)$ such that $u_n=u'_n+t_n$. By Theorem \ref{thm3.7},
the sequence $((u'_n,v_n))$ is increasing and the pair $(\sup_nu'_n, v)$ is pure.
It follows that $u=\sup_nu'_n+\sup_m\widehat{\inf}_{n\ge m}t_m$, which proves the proposition.
\end{proof}

\begin{cor}\label{cor3.6}
Suppose that the biharmonic space $(\Omega,\cal H)$ is strong. Then
there exists a  pure finite and continuous $\cal
H$-potential $(p,q)>0$ on $\Omega$.
\end{cor}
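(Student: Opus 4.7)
The strategy is to construct the required pair via the coupling kernel $V$ of Theorem~\ref{thm3.7}: pick a finite continuous $\cal H_2$-potential $q>0$ on $\Omega$ with $q\le q_0$, set $p:=Vq$, and check the desired properties. Here $(p_0,q_0)>0$ is any $\cal H$-potential provided by strongness, so that $q_0$ is in particular a $\cal H_2$-potential $>0$, and hence $(\Omega,\cal H_2)$ is a strong Bauer harmonic space; standard Bauer potential theory then produces the desired $q$ (for instance, as a locally uniformly convergent sum $\sum_n c_n K\varphi_n$ of potentials of $\varphi_n\in\cal C_c^+(\Omega)$ whose supports cover $\Omega$, with weights chosen so that $q$ is finite continuous and $q\le q_0$).

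Setting $p:=Vq$, Theorem~\ref{thm3.7}(2) immediately makes $(p,q)$ a pure $\cal H$-hyperharmonic pair. Monotonicity of $V$ together with $q\le q_0$ gives $p\le Vq_0$, and Proposition~\ref{prop3.5} applied to $(p_0,q_0)\in\cal S^+(\Omega)$, combined with the identification of $Vq_0$ as the pure $\cal H_1$-hyperharmonic function associated with $q_0$ (Theorem~\ref{thm3.7}(2)), yields $Vq_0\le p_0$. Since $p_0$ is a $\cal H_1$-potential it is finite on a dense subset of $\Omega$, and hence so is $p$; therefore $(p,q)$ is in fact a pure $\cal H$-superharmonic pair. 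Remark~\ref{remark3.1} then gives that $p$ is a $\cal H_1$-potential, and the elementary observation recorded just before Remark~\ref{cor3.9} in Section~\ref{section2} (an element of $\cal S^+(\Omega)$ whose coordinates are respectively $\cal H_1$- and $\cal H_2$-potentials is itself a $\cal H$-potential) shows that $(p,q)$ is a $\cal H$-potential. Positivity is immediate: $q>0$ by construction, and $p>0$ because, $q$ being $>0$, Remark~\ref{remark3.2} asserts that $p$ is a strict $\cal H_1$-potential.

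The main obstacle, which I would single out as the delicate step, is the continuity of $p=Vq$. Lower semicontinuity comes for free from $p$ being $\cal H_1$-hyperharmonic; only upper semicontinuity is at issue. My plan is to select the $q$ from the first step in the explicit form $q=\sum_n c_n K\varphi_n$, with $K$ a Bauer-potential kernel on $(\Omega,\cal H_2)$ producing continuous potentials from $\cal C_c^+$ inputs and $\varphi_n\in\cal C_c^+(\Omega)$, so that $p=\sum_n c_n (VK)\varphi_n$; one then analyses the composite kernel $VK$ through assertion~1 of Theorem~\ref{thm3.7} together with the regularity of $K$, and deduces continuity of $p$ by a Dini-type monotone convergence argument on the partial sums. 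Transferring continuity through the composition of two potential kernels is the technical heart of the proof; once it is established, the pieces above assemble into a pure, finite, continuous, strictly positive $\cal H$-potential $(p,q)$ as required.
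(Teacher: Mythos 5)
There is a genuine gap at precisely the step you yourself single out as the ``technical heart'': the continuity of $p=Vq$. Your argument for it is only a plan, and the plan as described would not close. First, Theorem~\ref{thm3.7} asserts only that $V\varphi$ is an $\cal H_1$-\emph{potential} for $\varphi\in\cal C_c^+(\Omega)$, not that it is finite and continuous (potentials in a Bauer space can be discontinuous), so the individual terms $c_n(VK)\varphi_n$ of your series are not known to be continuous without further input. Second, even granting continuity of each term, a ``Dini-type'' argument cannot produce continuity of the sum: Dini's theorem takes continuity of the monotone limit as a \emph{hypothesis} to conclude uniform convergence; an increasing limit of continuous functions is in general only l.s.c., which is exactly the half of the regularity you already have for free. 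So the upper semicontinuity of $Vq$ is never actually established, and everything else in your write-up (purity via Theorem~\ref{thm3.7}(2), finiteness via $Vq\le Vq_0\le p_0$, potential character via Remark~\ref{remark3.1}, positivity via Remark~\ref{remark3.2}) hangs on it.

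The paper avoids this difficulty entirely by going in the opposite direction: it first invokes Smyrnelis (Proposition~7.6 of \cite{S1}) to obtain a \emph{finite and continuous} $\cal H$-potential pair $(p,q)>0$, then applies Proposition~\ref{prop3.5} to write $p=u+p_1$ with $u\in\cal H_1^{*+}(\Omega)$ and $p_1$ the pure hyperharmonic function associated with $q$. Since $u$ and $p_1$ are both l.s.c.\ and their sum $p$ is finite and continuous, each summand is also u.s.c., hence $p_1$ is finite and continuous, and $(p_1,q)$ is the desired pure pair. The continuity is thus inherited from the ambient continuous potential by the decomposition, with no kernel-composition analysis needed. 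If you want to salvage your approach, you would have to prove that $V$ maps some sufficiently rich class of continuous $\cal H_2$-potentials to continuous functions, which is essentially as hard as the corollary itself; the decomposition route via Proposition~\ref{prop3.5} is the intended shortcut.
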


\begin{proof}  By \cite[Proposition 7.6, p. 3]{S1}, there exists a
finite continuous   $\cal H$-potential
pair $(p,q)>0$ on $\Omega$. Let $p_1$ be the pure
$\cal H_1$-hyperharmonic function  associated with $q$, there exists by the preceding proposition a
$\cal H_1$-superharmonic function $u\ge 0$ such that
$p=u+p_1$. Since $u$ and $p_1$ are l.s.c. and $p$ is finite and continuous on $\Omega$, we
deduce that $p_1$ is finite and continuous, and therefore $(p_1,q)$ is a pure finite and continuous $\cal H$-potential
on $\Omega$.
\end{proof}

\begin{cor}\label{cor3.3}
Assume that $(\Omega,\cal H)$ is a strong biharmonic space
and let $(u,v)$ be pure pair on $\Omega$. Then there exists an increasing sequence  $((u_n,v_n))$ of pure
finite and continuous $\cal H$-potentials such that $(u,v)=\sup_n(u_n,v_n)$.
\end{cor}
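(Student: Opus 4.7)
The plan is to exploit the coupling kernel $V$ of Theorem~\ref{thm3.7}. Since $(u,v)$ is pure, Theorem~\ref{thm3.7} gives $u=Vv$, so the task reduces to approximating the nonneg $\cal H_2$-hyperharmonic function $v$ from below by a suitable increasing sequence $(v_n)$ and setting $u_n := Vv_n$; Theorem~\ref{thm3.7} will then automatically make each pair $(u_n,v_n)$ pure, and monotone convergence for the kernel $V$ will give $(u_n,v_n)\uparrow (u,v)$.

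To arrange that $(u_n,v_n)$ be a \emph{finite continuous} $\cal H$-potential, I would use Corollary~\ref{cor3.6} to fix a pure finite continuous $\cal H$-potential $(p_1,q_1)$ with $q_1>0$ on $\Omega$, so that Proposition~\ref{prop3.4} provides the identity $Vw = W(w/q_1)$, where $W$ is the Borel kernel with $W1=p_1$. Since $(\Omega,\cal H_2)$ is a strong (Brelot) harmonic space, the standard Brelot approximation theorem writes $v=\sup_n\widetilde v_n$ as an increasing limit of finite continuous $\cal H_2$-potentials $\widetilde v_n$. Replacing $\widetilde v_n$ by $v_n := \widetilde v_n\wedge (nq_1)$ keeps a finite continuous $\cal H_2$-potential (the infimum of two $\cal H_2$-potentials is again a potential, its Riesz harmonic part being dominated by both vanishing parts), preserves $v_n\uparrow v$, and supplies the crucial pointwise bound $v_n/q_1\le n$.

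With this choice, $u_n = Vv_n = W(v_n/q_1)\le n\,W1 = np_1$, so $u_n$ is finite everywhere and $(u_n,v_n)$ is superharmonic; Remark~\ref{remark3.1} then forces $u_n$ to be a $\cal H_1$-potential; $v_n$ is a $\cal H_2$-potential by construction; and the statement recalled in Section~\ref{section2} yields that $(u_n,v_n)$ is a $\cal H$-potential, necessarily pure. The remaining and hardest step is the continuity of $u_n$: this has to be extracted from the regularity of the kernel $W$ on the bounded continuous input $v_n/q_1$, which in the Brelot setting comes from the continuity of the Green kernel of $(\Omega,\cal H_1)$ off the diagonal combined with a Dini-type comparison of $u_n$ with its l.s.c.\ regularisation; once this is in hand, $u_n$ is finite and continuous, completing the construction of the claimed increasing approximating sequence.
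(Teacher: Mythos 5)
Your overall strategy --- pull $v$ back to an increasing sequence of finite continuous $\cal H_2$-potentials, truncate against $nq_1$ so that $u_n=Vv_n=W(v_n/q_1)\le np_1$ is finite, and let monotone convergence of the kernel do the rest --- is genuinely different from the paper's. The paper approximates the \emph{pair} directly: Smyrnelis's approximation theorem gives an increasing sequence $(p_n,q_n)$ of continuous $\cal H$-potentials with supremum $(u,v)$; Proposition \ref{prop3.5} splits $p_n=t_n+p'_n$ with $(p'_n,q_n)$ pure; since the finite continuous function $p_n$ is a sum of two l.s.c.\ functions, each summand is also u.s.c., so $p'_n$ is continuous; and Theorem \ref{thm3.7} yields monotonicity of $(p'_n)$ and $\sup_n(p'_n,q_n)=(u,v)$. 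Everything there is soft, and no Green kernel is needed.

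The genuine gap in your version is exactly where you locate it: the continuity of $u_n=W(v_n/q_1)$. The mechanism you propose (continuity of the Green kernel of $(\Omega,\cal H_1)$ off the diagonal plus a Dini-type comparison) is not available in the stated generality: the corollary assumes only a strong biharmonic space, so the associated spaces are merely Bauer spaces and $(\Omega,\cal H_1)$ need not possess a Green kernel at all; and even in the Brelot case you have not actually supplied an argument. The gap is fillable, and by the very device the paper uses elsewhere: put $f=v_n/q_1$, a continuous function with $0\le f\le n$. Writing $f$ and $n-f$ as increasing limits of functions $\varphi\in \cal C_c^+(\Omega)$ (multiply by $\chi_m\uparrow 1$ with $\chi_m\in\cal C_c(\Omega)$) and using that $W\varphi$ is finite and continuous for such $\varphi$ (Remark \ref{remark4.6}), both $Wf$ and $W(n-f)$ are l.s.c.; their sum is $nW1=np_1$, finite and continuous; hence each is u.s.c.\ as well, hence continuous. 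With that patch, and replacing ``Brelot approximation'' by the approximation theorem valid in strong Bauer harmonic spaces, your remaining steps (purity of $(Vv_n,v_n)$ from Theorem \ref{thm3.7}, $u_n$ a $\cal H_1$-potential by Remark \ref{remark3.1}, the potential-pair criterion from Section \ref{section2}, and monotone convergence) are all correct.
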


\begin{proof}
By \cite[Th\'eor\`eme 7.8, p. 4]{S2}, there exists an increasing
sequence $(p_n,q_n)$ of continuous  $\cal H$-potentials
on  $\Omega$ such that $(u,v)=\sup_n(p_n,q_n)$. For each $n$ let $p'_n$ be the
pure $\cal H_1$-hyperharmonic function associated with $q_n$. Then
by Proposition \ref{prop3.5} there exists $t_n\in \cal H_1^{*+}(\Omega)$ such
$p_n=t_n+p'_n$. Since $p_n$ is continuous and that $t_n$ and $p'_n$ are l.s.c., then $p'_n$ is continuous.
Moreover, it follows from Theorem \ref{thm3.7}
that the sequence $(p'_n)$ is increasing and
$(u,v)=\sup_n(p'_n,q_n)$.
\end{proof}

\begin{prop}\label{prop3.8}
Let $(u,v)\in \cal S^+(\Omega)$ be a pure pair and $U$  an open subset of $\Omega$.
If $v$ is $\cal H_2$-harmonic on $U$, then the pair $(u,v)$ is $\cal H$-harmonic on $U$.
\end{prop}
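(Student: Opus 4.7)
The plan is to exploit the minimality built into the notion of purity. Since $u$ is by definition the smallest nonnegative $\cal H_1$-hyperharmonic function on $\Omega$ for which $(u,v)\in\cal H^{*+}(\Omega)$, it will suffice, for each $\cal H$-regular relatively compact open $\omega$ with $\overline\omega\subset U$, to construct a competitor $\tilde u\le u$ with $(\tilde u,v)\in\cal H^{*+}(\Omega)$ that coincides on $\omega$ with the first component of the (generalised) Riquier solution with boundary values $(u,v)|_{\partial\omega}$. Purity will force $u\le\tilde u$, and combined with $\tilde u\le u$ this pins $u$ to its Riquier value on $\omega$. Since such $\omega$ form a base of the topology of $U$ by Axiom~2, the pair $(u,v)$ will satisfy the $\cal H$-harmonic identities on that base, and together with the already given $\cal H_2$-harmonicity of $v$ on $U$ this will yield the conclusion.

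The construction is direct. Fix $\omega$ and set $\hat u(x):=\int u\,d\mu_x^\omega+\int v\,d\nu_x^\omega$ on $\omega$, obtained as the monotone limit of the Riquier solutions associated with continuous minorants $(f_n,g_n)\uparrow (u,v)|_{\partial\omega}$, so that the pair $(\hat u,v|_\omega)$ is $\cal H$-harmonic on $\omega$ (its second component is $\int v\,d\lambda_\cdot^\omega$, which equals $v$ on $\omega$ because $v$ is $\cal H_2$-harmonic on the larger open set $U\supset\overline\omega$). Define
$$\tilde u(x)=\begin{cases}\hat u(x),& x\in\omega,\\ u(x),& x\in\Omega\setminus\omega.\end{cases}$$
The $\cal H$-superharmonicity of $(u,v)$ gives $\hat u\le u$ on $\omega$, so $\tilde u\le u$ globally, and nonnegativity of $\tilde u$ is automatic.

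What remains is to verify that $(\tilde u,v)\in\cal H^{*+}(\Omega)$, after which purity closes the argument. Inside $\omega$ the pair is $\cal H$-harmonic by construction, and outside $\overline\omega$ it agrees with $(u,v)\in\cal H^{*+}(\Omega)$. The delicate ingredient, and what I expect to be the main obstacle, is the behaviour at points $z\in\partial\omega$: lower semicontinuity of $\tilde u$ there, and the hyperharmonic inequality on regular sets $\omega'$ that straddle $\partial\omega$. Lower semicontinuity reduces to checking $\liminf_{y\to z,\,y\in\omega}\hat u(y)\ge u(z)$; because $z$ is $\cal H_1$-regular and $u|_{\partial\omega}$ is l.s.c., this follows by approximating $u|_{\partial\omega}$ from below by continuous functions and invoking the standard boundary behaviour of the Dirichlet solution, the nonnegative term $\int v\,d\nu_y^\omega$ being harmless. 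The remaining hyperharmonic inequalities on such $\omega'$ then follow from Smyrnelis' gluing lemma (Proposition 1.21 of \cite{S1}), exactly as used in the proof of Proposition \ref{prop3.5} above.
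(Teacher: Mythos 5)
Your proof is correct and follows essentially the same route as the paper's: form the Poisson-modified pair (harmonic extension $\int u\,d\mu_\centerdot^\omega+\int v\,d\nu_\centerdot^\omega$ inside a regular $\omega\subset\overline\omega\subset U$, $u$ outside), use $\cal H$-superharmonicity for $\le$ and minimality of the pure function for $\ge$. The only difference is that where you re-derive the hyperharmonicity of the glued pair by hand (boundary liminf check plus the gluing lemma, Proposition 1.21 of \cite{S1}), the paper simply invokes the ready-made modification result \cite[Proposition 4.2, p.~73]{S1}.
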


\begin{proof} Let $\omega\in \cal U_r$ such that $\overline \omega\subset U$. Then we have
$\int vd\lambda_x^\omega=v(x)$ for every $x\in \omega$, hence the pair
$$(s,v)= \left\{\begin{array}{c}
(\int ud\mu_\centerdot^\omega+\int vd\nu_\centerdot^\omega,v) \text{ on } \omega\\
(u,v) \text { on } \Omega \smallsetminus \omega
\end{array}
\right.$$
is $\cal H$-hyperharmonic on $\Omega$ by \cite[Proposition 4.2, p. 73]{S1}. Hence
$s\ge u$ and therefore $u=s=\int ud\mu_\centerdot^\omega+\int vd\nu_\centerdot^\omega$
on $\omega$. Since $\omega$ is arbitrary,  this proves that the pair
$(u,v)$ is biharmonic on $U$ by \cite[Proposition 5.4, p. 76]{S1}.
\end{proof}

\begin{cor}\label{cor3.9}
If a pair  $(u,v)\in \cal S^+(\Omega)$ is a $\cal
H$-potential, then $v$ is a $\cal H_2$-potential.
\end{cor}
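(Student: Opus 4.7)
The plan is to show $v$ is an $\cal H_2$-potential by taking any $\cal H_2$-harmonic minorant $k\ge 0$ of $v$ and producing an $\cal H$-harmonic non-negative pair of the form $(h_0,k)$ dominated by $(u,v)$; the potential property of $(u,v)$ will then force $k=0$.

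First I would fix an arbitrary $k\in \cal H_2^+(\Omega)$ with $k\le v$, and verify that the pair $(u,k)$ is still $\cal H$-hyperharmonic on $\Omega$. This is a direct check against the two defining inequalities: for every $\omega\in\cal U_r$ with $\overline\omega\subset\Omega$ and every $x\in\omega$, the first inequality follows from
\[
\int^{*} u\,d\mu_x^\omega+\int^{*} k\,d\nu_x^\omega\le \int^{*} u\,d\mu_x^\omega+\int^{*} v\,d\nu_x^\omega\le u(x),
\]
using $k\le v$ and $(u,v)\in\cal H^{*+}(\Omega)$; the second holds with equality since $k$ is $\cal H_2$-harmonic. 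Hence $u\in\cal U_0(k)$.

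Next I would invoke the definition of the pure hyperharmonic function of order $2$ associated with $k$: let $h_0=\widehat{\inf}\,\cal U_0(k)\in\cal H_1^{*+}(\Omega)$. Since $u\in\cal U_0(k)$ and $u$ is itself l.s.c., one has $h_0\le u$. In particular $h_0$ is finite on a dense subset of $\Omega$ (as $u$ is), so $(h_0,k)\in\cal S^+(\Omega)$ and it is pure by construction. Because $k$ is $\cal H_2$-harmonic on the whole of $\Omega$, Proposition \ref{prop3.8} applies with $U=\Omega$ and yields that $(h_0,k)\in\cal H^+(\Omega)$.

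Finally I would feed this back into the definition of a $\cal H$-potential: since $(h_0,k)\in\cal H^+(\Omega)$ and $(h_0,k)\le(u,v)$, the hypothesis that $(u,v)$ is an $\cal H$-potential forces $h_0=k=0$. As $k$ was an arbitrary non-negative $\cal H_2$-harmonic minorant of $v$, this shows $v$ is a $\cal H_2$-potential. The only delicate point is the verification in Proposition \ref{prop3.8}'s hypothesis that $(h_0,k)$ really lies in $\cal S^+(\Omega)$ (i.e.\ $h_0\not\equiv+\infty$ on a dense set), which is secured by the bound $h_0\le u$ established in the second step.
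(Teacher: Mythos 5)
Your proof is correct and follows essentially the same route as the paper: pair a nonnegative $\cal H_2$-harmonic minorant $k$ of $v$ with the pure hyperharmonic function of order $2$ associated with $k$, observe that it is dominated by $(u,v)$, apply Proposition \ref{prop3.8} to conclude the pair is $\cal H$-harmonic, and invoke the potential property of $(u,v)$. The only (immaterial) difference is that the paper takes $k$ to be the harmonic part of the Riesz decomposition of $v$, whereas you take an arbitrary nonnegative harmonic minorant; your verification that $u\in\cal U_0(k)$, which the paper leaves implicit, is a welcome addition.
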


\begin{proof} The function $v$ being $\cal H_2$-superharmonic,
let $k$ be the $\cal H_2$-harmonic part of $v$ in its Riesz decomposition
and $h$ the pure  $\cal H_2$-hyperharmonic associated with $k$. Then $h\le u$, hence
$(h,k)$ is $\cal H$-superharmonic and therefore biharmonic by the above proposition.
It follows that $(h,k)=0$, and hence $k=0$, which proves that $v$ is a $\cal H_2$-potential.
\end{proof}

\begin{cor}\label{cor3.10}
Let  $(h,k)$ be  a nonnegative $\cal H$-harmonic pair  and let $p$
be the $\cal H_1$-potential part of the   Riesz decomposition of the
 $\cal H_1$-superharmonic function $h$. Then  $(p,k)$ is
a pure $\cal H$-biharmonic pair.
\end{cor}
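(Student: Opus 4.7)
The plan is to establish two things in order: first that $(p,k)$ is actually $\cal H$-harmonic (so that it makes sense as a biharmonic pair), and second that $p$ is the pure $\cal H_1$-hyperharmonic function of order two associated with $k$. Both parts rely on the Riesz decomposition on $(\Omega,\cal H_1)$ plus Propositions \ref{prop3.5} and \ref{prop3.8}.

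For the first part I would begin by noting that because $(h,k)\in \cal H^+(\Omega)$ and $k\ge 0$, the biharmonic mean-value equality forces $h(x)\ge \int h\,d\mu_x^\omega$ for every $\omega\in\cal U_r$ with $\overline\omega\subset\Omega$, so $h$ is $\cal H_1$-superharmonic. Writing its Riesz decomposition as $h=h'+p$ with $h'$ the greatest $\cal H_1$-harmonic minorant and $p$ a $\cal H_1$-potential, continuity of $h'$ (it is $\cal H_1$-harmonic, hence continuous and real-valued) combined with continuity of $h$ gives continuity of $p=h-h'$. A direct calculation on any regular $\omega$ then yields
\[
\int p\,d\mu_x^\omega+\int k\,d\nu_x^\omega=\Bigl(\int h\,d\mu_x^\omega+\int k\,d\nu_x^\omega\Bigr)-\int h'\,d\mu_x^\omega=h(x)-h'(x)=p(x),
\]
while the second equation $k(x)=\int k\,d\lambda_x^\omega$ comes directly from $(h,k)$ being biharmonic. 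Hence $(p,k)\in\cal H(\Omega)$.

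For the second part, let $u_0$ denote the pure $\cal H_1$-hyperharmonic function of order 2 associated with $k$. Since $(p,k)\in\cal S^+(\Omega)$, Proposition \ref{prop3.5} yields a nonnegative $\cal H_1$-hyperharmonic $u_1$ with $p=u_0+u_1$; the bound $u_1\le p$ with $p$ finite shows $u_1$ is $\cal H_1$-superharmonic. Because $k$ is $\cal H_2$-harmonic on all of $\Omega$, Proposition \ref{prop3.8} applies to the pure pair $(u_0,k)$ and gives that $(u_0,k)$ is $\cal H$-harmonic. Subtracting the mean-value equalities for $(p,k)$ and for $(u_0,k)$ on any $\omega\in\cal U_r$ yields $u_1(x)=\int u_1\,d\mu_x^\omega$, so $u_1$ is $\cal H_1$-harmonic.

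To conclude, $u_1$ is a nonnegative $\cal H_1$-harmonic minorant of $p$; since $p$ is an $\cal H_1$-potential, the only such minorant is $0$, so $u_1=0$ and $p=u_0$, i.e. $(p,k)$ is pure. The only delicate point, and likely the main obstacle to write cleanly, is the subtraction step producing $u_1$ being $\cal H_1$-harmonic: it must be justified that $u_0$ is finite everywhere on $\omega$ so that the pointwise subtraction of integrals is legitimate, which follows from $u_0\le p$ and the finiteness of $p$ that was established in the first paragraph.
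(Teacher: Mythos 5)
Your proof is correct. The paper states this corollary without proof, and your argument is precisely the one its placement suggests: combine the Riesz decomposition of $h$ with Proposition \ref{prop3.5} (to write $p=u_0+u_1$) and Proposition \ref{prop3.8} (to see that the pure pair $(u_0,k)$ is $\cal H$-harmonic), then kill $u_1$ as a nonnegative $\cal H_1$-harmonic minorant of the potential $p$; you also correctly flag and resolve the only delicate point, the finiteness needed to legitimize the subtraction of the mean-value identities. A marginal shortcut: applying Proposition \ref{prop3.5} directly to $(h,k)$ gives $h=u_0+u_1$ with $u_1$ harmonic by the same subtraction, so $h=u_1+u_0$ is the Riesz decomposition and $p=u_0$ by its uniqueness, which lets you skip verifying separately that $(p,k)$ is biharmonic.
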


Let $(\Omega,\cal H)$ be a biharmonic space and suppose that the harmonic space $(\Omega,\cal H_1)$
is a Green space. Consider a continuous function $y\mapsto p_y$ from $\Omega$ into the
cone $\cal S_1^+(\Omega)$
endowed with the topology of R-M. Herv\'e,  which associates to any $y\in \Omega$
a $\cal H_1$-potential of  harmonic support $\{y\}$ (cf. \cite[Chap. IV]{He}
and \cite[Proposition 22.1, p. 507]{He}).  Hence, according to the
theorem of integral  representation of potentials (cf. \cite[Theorem
18.2, p. 482]{He}), there exists a unique nonnegative Radon measure
 $\mu$ on $\Omega$ such that $p=\int
p_yd\mu(y)$. In this case the kernel $W$ is then given by
$$Wf(x)=\int_\Omega p_y(x)f(y)d\mu(y)$$ for any
$x\in \Omega$ and any function $f\in \cal B^+(\Omega)$.
As it can be seen from the above considerations, the kernel $V$ of
Theorem \ref{thm3.7} is given by
$$Vf(x)=\int_{\Omega}\frac{p_y(x)}{q(y)}f(y)d\mu(y)$$
for any $x\in \Omega$ and any function $f\in \cal B^+(\Omega)$.
Thus we have established the following proposition:

\begin{prop}\label{prop3.9}
Let $(\Omega,\cal H)$ be a  strong  biharmonic
space and suppose that the  associated harmonic space $(\Omega,\cal H_1)$
is a Green space and with coupling kernel $V$. Consider a continuous function
$y\mapsto p_y$ from $\Omega$ into the cone $\cal
S_1^+(\Omega)$ of $\cal H_1$-superharmonic functions $\ge 0$ endowed with the
topology of R-M. Herv\'e, associating to any $y\in \Omega$ a
$\cal H_1$-potential of harmonic support $\{y\}$. Then there exists a
unique Radon
measure  $\mu\ge 0$ on $\Omega$ such that $Vf(x)=\int
p_y(x)f(y)d\mu(y)$ for any $x\in \Omega$ and any $f\in \cal B^+(\Omega)$.
\end{prop}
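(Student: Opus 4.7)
The plan is to deduce the proposition from the setup already developed in the preceding pages: produce a pure finite continuous $\cal H$-potential, invoke R.-M. Hervé's integral representation theorem for $\cal H_1$-potentials in a Green space, and then transfer this representation from $W$ to $V$ via the identity $Wf=V(qf)$.

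First, by Corollary \ref{cor3.6} fix a pure finite continuous $\cal H$-potential $(p,q)>0$ on $\Omega$. By Remark \ref{remark3.2} the potential $p$ is strict, so the Meyer/Constantinescu--Cornea theorem provides a unique Borel kernel $W$ with $W1=p$ satisfying properties 1--2 of the characterization recalled before Proposition \ref{prop3.4}; moreover the identity $Wf=V(qf)$ holds by the uniqueness argument the authors have already given. Since $(\Omega,\cal H_1)$ is a Green space and $p$ is an $\cal H_1$-potential, Hervé's integral representation theorem (\cite[Theorem 18.2]{He}) yields a unique Radon measure $\nu\ge 0$ on $\Omega$ with $p=\int p_y\,d\nu(y)$.

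Next, define a candidate kernel $W'$ by $W'\varphi(x)=\int p_y(x)\varphi(y)\,d\nu(y)$ for $\varphi\in\cal B^+(\Omega)$. The integrand is jointly Borel (continuity of $y\mapsto p_y$ into Hervé's topology yields lower semicontinuity in $(x,y)$, hence measurability), and for $\varphi\in\cal C_c^+(\Omega)$ the function $W'\varphi$ is an integral of $\cal H_1$-potentials with harmonic support in $S(\varphi)$, so it is itself an $\cal H_1$-potential that is $\cal H_1$-harmonic on $\Omega\setminus S(\varphi)$ (Fubini applied to the inequality defining $\cal H_1$-hyperharmonicity on each regular $\omega$ disjoint from $S(\varphi)$). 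Since also $W'1=p$, the uniqueness of $W$ gives $W=W'$.

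Now set $d\mu(y)=q(y)^{-1}d\nu(y)$. Because $q$ is continuous and strictly positive on $\Omega$, this is a Radon measure. For $f\in\cal C_c^+(\Omega)$,
$$Vf(x)=W\!\left(\tfrac{f}{q}\right)(x)=\int p_y(x)\,\tfrac{f(y)}{q(y)}\,d\nu(y)=\int p_y(x)\,f(y)\,d\mu(y),$$
and the identity extends to all $f\in\cal B^+(\Omega)$ by the monotone class theorem. For uniqueness, if $\mu_1,\mu_2$ both represent $V$, then for every $\varphi\in\cal C_c^+(\Omega)$ the $\cal H_1$-potential $V\varphi$ has two integral representations $\int p_y(\cdot)\varphi(y)\,d\mu_i(y)$; the uniqueness clause in Hervé's Theorem 18.2 forces $\varphi\mu_1=\varphi\mu_2$, and letting $\varphi$ vary over $\cal C_c^+(\Omega)$ gives $\mu_1=\mu_2$.

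The one step requiring a bit of care is the identification $W=W'$: specifically, verifying that the integral $W'\varphi$ inherits from $y\mapsto p_y$ both the $\cal H_1$-potential property and $\cal H_1$-harmonicity outside $S(\varphi)$. This is the standard Fubini argument on the biharmonic/harmonic measure inequalities, combined with the continuity of $y\mapsto p_y$ in Hervé's topology (which ensures the required measurability and that the harmonic support of the average lies in $S(\varphi)$); everything else is then a rearrangement of what the preceding discussion already put in place.
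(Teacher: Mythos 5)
Your proof is correct and follows essentially the same route as the paper: fix a pure finite continuous $\cal H$-potential $(p,q)>0$ via Corollary \ref{cor3.6}, represent the strict potential $p=\int p_y\,d\nu(y)$ by Herv\'e's Theorem 18.2, identify $W$ with the kernel $f\mapsto\int p_y(\cdot)f(y)\,d\nu(y)$ by the uniqueness of the Meyer/Constantinescu--Cornea kernel, and pass to $V$ through $Wf=V(qf)$, i.e.\ $d\mu=q^{-1}d\nu$. You merely spell out the identification $W=W'$ and the uniqueness of $\mu$ in more detail than the paper, which leaves these as ``can be seen from the above considerations.''
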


\section{biharmonic space associated with two strong harmonic spaces and a given coupling kernel}\label{section5}

Given two harmonic spaces $(\Omega,\cal H_1)$ and $(\Omega,\cal H_2)$, the first
space assumed to be strong, and a strict finite and continuous
$\cal H_1$-potential on $\Omega$, in \cite{S1}
it is constructed a biharmonic space $(\Omega,\cal H)$ whose harmonic spaces are $(\Omega,\cal H_1)$ and
$(\Omega,\cal H_2)$.  By using sections of continuous potentials, in \cite{Bouk} it is given
a more general method for constructing such biharmonic space and it is proved
a characterization of biharmonic spaces whose harmonic associated spaces are
$(\Omega,\cal H_1)$ and $(\Omega,\cal H_2)$.

Here we prove that there is a unique biharmonic space $(\Omega,\cal H)$ whose associated harmonic
spaces are $(\Omega,\cal H_1)$ and
$(\Omega,\cal H_2)$ with coupling kernel a given suitable kernel $V$ on $\Omega$.

Let  $\omega$  be a relatively compact open subset of $\Omega$, we
denote by $H_\omega^1(f)$ and $H_\omega^2(f)$ the solutions of the
Dirichlet problem in $\omega$ with respect to the harmonic spaces
$(\Omega,\cal H_1)$ and $(\Omega,\cal H_2)$ respectively, for the
continuous boundary data  $f$ on $\partial \omega$.
We also denote the operators of reduction (resp. swept
out) on $A\subset \Omega$ with respect to the   harmonic sheafs
$\cal H_1$ and $\cal H_2$  respectively by $R^{1,A}$ and $R^{2,A}$
(resp. $\widehat R^{1,A}$ and $\widehat R^{2,A}$).

The following result seems to be new in the axiomatic theory of biharmonic spaces:

\begin{theorem}\label{thm3.16}
Let $\cal H_1$ and $\cal H_2$ be two Bauer harmonic sheaves on
$\Omega$ and  $V$ a Borel kernel on $\Omega$ such that for any
$\varphi\in \cal C_c^+(\Omega)$, the function $V(\varphi)$ is a finite and continuous  $\cal H_1$-potential
$\cal H_1$-harmonic on $\Omega\smallsetminus S(\varphi)$.
Suppose  moreover that $\Omega$ has a base $\cal V$
of open subsets  which are both $\cal H_1$-regular and $\cal
H_2$-regular,  and that there exists a $\cal H_2$-potential $q>0$
such that $Vq$ is a $\cal H_1$-potential. Then there exists a unique
strong biharmonic space $(\Omega,\cal H)$ whose associated harmonic spaces
are $(\Omega,\cal H_1)$ and $(\Omega,\cal H_2)$, and  $V$ as
coupling kernel.
\end{theorem}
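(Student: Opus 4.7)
The strategy is to build the biharmonic sheaf $\cal H$ by specifying, for every $\omega \in \cal V$, a triple of biharmonic measures derived from $\cal H_1$, $\cal H_2$, and $V$; to define $\cal H(U)$ via the resulting mean-value property; and then to verify Axioms 1--3 together with the fact that $V$ is the coupling kernel. Uniqueness will follow because the only nontrivial biharmonic measure $\nu_x^\omega$ turns out to be rigidly determined by $V$ and $\mu_x^{1,\omega}$, while strongness is witnessed by the pair $(Vq,q)$.

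For each $\omega \in \cal V$ I would first define the local kernel
$$V_\omega \varphi(x) = V\varphi(x) - \int V\varphi\,d\mu_x^{1,\omega}, \qquad x\in\omega,$$
for $\varphi \in \cal C_c^+(\omega)$, and extend it to a Borel kernel on $\omega$ by monotone class. Since $V\varphi$ is a finite continuous $\cal H_1$-potential, $\cal H_1$-harmonic off $S(\varphi)$, the function $V_\omega \varphi$ is nonnegative, continuous on $\bar\omega$, and vanishes on $\partial\omega$ by $\cal H_1$-regularity. For $g \in \cal C(\partial\omega)$, set $\tilde g = H_\omega^2(g) \in \cal C(\bar\omega)$. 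Using the domination $|\tilde g| \leq (\|g\|_\infty / \min_{\bar\omega} q)\, q$ together with the hypothesis that $Vq$ is a $\cal H_1$-potential, one shows that $V_\omega \tilde g$ is finite, continuous on $\bar\omega$, and vanishes on $\partial\omega$. Define the Riquier solution
$$H_\omega(f,g) := \bigl(H_\omega^1(f) + V_\omega \tilde g,\ \tilde g\bigr),$$
from which one reads off $\mu_x^\omega = \mu_x^{1,\omega}$, $\lambda_x^\omega = \mu_x^{2,\omega}$, and the measure $\nu_x^\omega$ characterized by $\int g\,d\nu_x^\omega = V_\omega \tilde g(x)$. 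Then let $\cal H(U)$ be the pairs $(u,v)\in \cal C(U)^2$ satisfying the two mean-value equalities for every $\omega \in \cal V$ with $\bar\omega \subset U$ and every $x \in \omega$. Axiom 1 is immediate from locality, Axiom 2 holds because $\cal V$ is a base, and for Axiom 3 one checks that $(u,0) \in \cal H(U) \Leftrightarrow u \in \cal H_1(U)$ (the $\nu$-term trivializes) and that the second component of any element of $\cal H(U)$ is $\cal H_2$-harmonic.

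To identify $V$ as the coupling kernel I would invoke Theorem \ref{thm3.7}: for $v \in \cal H_2^{*+}(\Omega)$ the pair $(Vv,v)$ lies in $\cal H^{*+}(\Omega)$ by construction, and it is pure because for any competing $(u,v)$ the difference $u - Vv$ satisfies the $\cal H_1$-hyperharmonic mean-value inequality on each $\omega \in \cal V$ (the $v$-contributions cancel), hence is $\cal H_1$-hyperharmonic and nonnegative, forcing $u \geq Vv$. Strongness is witnessed by $(Vq,q)>0$. For uniqueness, if $(\Omega,\cal H')$ is another such biharmonic space, its biharmonic measures on every $\omega \in \cal V$ must coincide with $(\mu_x^{1,\omega}, \nu_x^\omega, \mu_x^{2,\omega})$: the first and third are forced by $\cal H_1$ and $\cal H_2$, and the second is forced by applying the Riquier problem to the datum $(0,g)$, whose solution must be $(V_\omega \tilde g, \tilde g)$ in any biharmonic space having $V$ as coupling kernel, since $V_\omega$ depends only on $V$ and $\mu^{1,\omega}$. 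Hence $\cal H'$ and $\cal H$ agree on $\cal V$ and, by Axiom 1, everywhere.

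The main technical obstacle is the continuity and boundary vanishing of $V_\omega \tilde g$ on $\bar\omega$ for the non-compactly-supported function $\tilde g$: one must approximate by $\varphi_n \in \cal C_c^+(\omega)$ with $\varphi_n \uparrow \tilde g$, control their $V_\omega$-images uniformly by exploiting the domination $|\tilde g| \leq C q$ and the fact that $V_\omega q$ is a $\cal H_1$-potential on $\omega$, and finally use $\cal H_1$-regularity of $\omega$ to pass to the limit at the boundary. Once this single analytic step is in place, the remainder of the argument is a formal verification modelled on Smyrnelis' axiomatic framework and the results of Section \ref{section3}.
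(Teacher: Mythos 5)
Your construction is a genuinely different route from the paper's. The paper does not build the sheaf by hand: it uses the hypothesis that $Vq$ is an $\mathcal{H}_1$-potential to show that $Vq$ is continuous, forms the section of finite continuous potentials $p_U = Vq - \widehat{R}^{1,\Omega\smallsetminus U}_{Vq}=Vq-H^1_U(Vq)$, and invokes Boukricha's construction theorem \cite{Bouk} to obtain $(\Omega,\mathcal{H})$; it then identifies the coupling kernel by normalizing so that $1$ is $\mathcal{H}_2$-harmonic, observing via Corollary \ref{cor3.10} that $V1$ is the pure hyperharmonic function of order $2$ associated with $1$, and concluding $V=V'$ from the uniqueness theorem for Borel kernels representing a given strict potential (\cite[Exercice 8.2.3, p.\ 198]{CC}). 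Your direct prescription of the triple $(\mu_x^{1,\omega},\nu_x^\omega,\mu_x^{2,\omega})$ with $\int g\,d\nu_x^\omega = V_\omega(H^2_\omega g)(x)$ is in substance a re-proof of Boukricha's theorem; it is more self-contained and has the merit of making uniqueness explicit (the paper's proof is silent on that point), at the cost of verifying the coherence of the Riquier solutions across nested $\omega\in\mathcal{V}$ and the boundary behaviour of $V_\omega\tilde g$, which you correctly single out as the main analytic step.

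There is, however, one step that fails as written: the claim that for a competing pair $(u,v)\in\mathcal{H}^{*+}(\Omega)$ the difference $u-Vv$ satisfies the $\mathcal{H}_1$-hyperharmonic mean-value inequality ``because the $v$-contributions cancel.'' On $\omega\in\mathcal{V}$ the contribution of $v$ to the superharmonicity of the pair is $\int v\,d\nu_x^\omega = V_\omega\bigl(H^2_\omega(v|_{\partial\omega})\bigr)(x)$, whereas the local potential part of $Vv$ on $\omega$ is $V_\omega(v|_\omega)(x)$; since $H^2_\omega(v)\le v$ on $\omega$ for a merely $\mathcal{H}_2$-superharmonic $v$, these two quantities differ, and the resulting inequality points the wrong way for your subtraction. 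The cancellation is exact only when $v$ is $\mathcal{H}_2$-harmonic on $\omega$, so your argument shows that $(Vv,v)$ is hyperharmonic but not that it is pure for general $v\in\mathcal{H}_2^{*+}(\Omega)$ (there is also the minor issue that $u-Vv$ need not be l.s.c.). The repair is precisely the paper's device: establish purity for a single $\mathcal{H}_2$-harmonic $v$ (e.g.\ $v\equiv 1$ after the normalization, where the cancellation is exact and Corollary \ref{cor3.10} applies), and then transfer the identity to all of $\mathcal{H}_2^{*+}(\Omega)$ through the uniqueness of the Borel kernel $W$ such that $W\varphi$ is a potential harmonic off $S(\varphi)$ and $W1$ is prescribed. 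With that substitution your construction goes through.
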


\begin{remark}\label{remark4.6}
For a Borel  kernel $V$ on an harmonic space $\Omega$, the property that for any continuous
function $f\ge 0$ on $\Omega$ with compact support, the function $Vf$
is a potential on $\Omega$, harmonic on $\Omega\smallsetminus S(f)$ is equivalent to the
property that for every $f\in \cal B^+(\Omega)$
which is bounded and equals zero outside a compact $K$ of $\Omega$, the function $Vf$ is
finite and continuous on $\Omega$ and harmonic on $\Omega\smallsetminus K$.
\end{remark}

\smallskip\noindent{\it Proof of Theorem \ref{thm3.16}}.
There exists a continuous  $\cal H_2$-potential $0<q_0\le q$, hence
$V(q_0)$ is $\cal H_1$-superharmonic (because $q$ is the supremum of an increasing sequence
of continuous potentials $q_n$, $n\in \NN$). By replacing $q$ by $q_0$  we may assume that the
$\cal H_2$-potential $q$ is continuous.
For any $U\in \cal U_c$
we have $V(q)=V(1_Uq)+V(1_{\Omega\smallsetminus U}.q)$. It follows from the
hypotheses of the theorem that $V(1_U.q)$ is continuous on $\Omega$ and
$V(1_{\Omega\smallsetminus U}q)$ is $\cal H_1$-harmonic in $U$  (by Remark \ref{remark4.6}),
hence $V(q)$ is continuous
on $U$, and consequently $V(q)$ is continuous on $\Omega$. For any $U\in \cal U_c$, the potential (in $U$)
$$p_U=V(q)-\widehat R_{V(q)}^{1,\Omega\smallsetminus U}=V(q)-H^1_U(V(q))$$
is finite and continuous on $U$. The family $(p_U)_{U\in {\cal U}_c}$ is a section of
continuous and finite $\cal H_1$-potentials on $\Omega$. Let $(\Omega,\cal H)$ be the biharmonic
space associated with the harmonic spaces $(\Omega,\cal H_1)$ and $(\Omega, \cal H_2)$ and
the section of continuous potentials $(p_U)_{U\in \cal U_c}$ by \cite{Bouk}.

It remains to show that the coupling kernel of $(\Omega,\cal H)$ is equal to $V$. We may assume
that the constant 1 is  $\cal H_2$-harmonic on $\Omega$ and that $V1$ is a
$\cal H_1$-surharmonic, hence a $\cal H_1$-potential on $\Omega$. Let $V'$ be the coupling kernel of biharmonic
space $(\Omega,\cal H)$. Since $V1$ is a $\cal H_1$-potential and
the pair $(V1,1)$ is  $\cal H$-harmonic in $\Omega$, then, according
to the Corollary \ref{cor3.10}, $V1$ is the pure  hyperharmonic
function of order 2 associated to the constant function $u\equiv 1$, hence $V'1=V1$. According to
the conditions on $V$ and properties of the kernel $V'$ we have $V=V'$
following \cite[Exercice 8.2.3., p. 198]{CC}.
\hfill$\square$

Let $(\Omega,\cal H)$ be a strong biharmonic
space with associated harmonic spaces
$(\Omega,\cal H_1)$ and $(\Omega,\cal H_2)$, and suppose that the harmonic space $(\Omega,\cal H_2)$ is a Green space,
and let
$y\mapsto q_y$ be a continuous map from $\Omega$ into $\cal S_2(\Omega)$ endowed with the
topology of R.-M. Herv\'e, which associates to every $y\in \Omega$ an
$\cal H_2$-potential $q_y$ with harmonic support $\{y\}$, and denote by $G_2$ the function defined
on $\Omega^2=\Omega\times \Omega$ by $G_2(x,y)=q_y(x)$.
If for any $y\in \Omega$ the function $p'_y$ (the pure hyperharmonic function associated with $q_y$)
is $\cal H_1$-superharmonic, then the function
$H$ defined on $\Omega^2$ by $H(x,y)=p'_y(x)$ is called a (or the) \textit{biharmonic Green kernel}
or \textit{second Green kernel}
of the biharmonic space $(\Omega,\cal H)$.\\

\section{Examples}\label{section7}

In this section we  give some examples showing  that the basic
results of  \cite{S3,S4} are incomplete. These results were used
in \cite{S5} to define   the adjoint biharmonic
space of a Green biharmonic space. Although the results of \cite{S5} are  true,
they  are built on  results that seem to be incomplete as mentioned in the introduction and
the previous sections. We shall clarify these aspects below.

\begin{example}\label{example4.1} Let  $N$ be an integer $\ge 1$, and $(\RR^N, \cal H)$
the biharmonic space of the example \ref{example2.1} corresponding to
$L_1=L_2=\Delta$.  The space $(\Omega,\cal H)$ is strong if and only
if $N\ge 5$ (see for example \cite[p. 588]{EK1}). Suppose that $N\ge
5$ and let $(p,q)$ be a  continuous $\cal H$-potential on $\RR^N$
such that
$p>0$ and $q>0$. The coupling kernel of $(\RR^N,\cal H)$ is  given
by
$$Vf(x)=c_N\int_{\RR^n}\frac{f(y)}{||x-y||^{N-2}}d\lambda(y)$$ for any function
 $f\in \cal B_+(\RR^N)$ and any $x\in \RR^N$, where $\lambda$ denotes the Lebesgue measure on
  $\RR^N$ and $c_N$ is a normalizing  constant satisfying
$\Delta \frac{c_N}{||\centerdot-y||^{N-2}}=-\epsilon_y$ in the distribution
sense for all $y\in \RR^N$, and where $\epsilon_y$ is the Dirac
measure at the point  $y$ ($G(x,y)=\frac{c_N}{||x-y||^{N-2}}$ is
the normalized Green kernel of $\RR^N$). Note first that there is no  biharmonic
pair $(h,k)$ with $h,k>0$ on $\RR^N$. Indeed, if a such  pair exists,
the function $k$ would be harmonic and  $>0$ in $\RR^N$, then constant,
say $k=c>0$. We would then have $h(x)\ge
Vk(x)=c.c_N\int_{\RR^N}\frac{1}{||x-y||^{N-2}}d\lambda(y)=+\infty$
for any $x\in \RR^n$,
which is a contradiction.

Consider now the sheaf ${\cal H'}$ of vector spaces of
pairs of real continuous functions defined for any open subset $\omega$ of
$\RR^N$ by

$${\cal H'}(\omega)=\{(h,k)\in {\cal C}(\omega)\times {\cal C}(\omega): (ph,qk) \ {\rm is}
\ \cal H{\rm -biharmonic \ on} \ \omega\},$$
so that we necessarily have

$${\cal H'}^*(\omega)=\{(u,v)\in {\cal C}_l(\omega)\times {\cal C}_l(\omega): (pu,qv) \ {\rm is}
\ \cal H-{\rm hyperharmonic \ on} \ \omega\},$$
where $\cal C_l(\omega)$ denotes the convex cone of nonnegative l.s.c. functions on $\omega$.

It is clear that $(1,1)\in {\cal H'}^*(\RR^N)$. But there is no
biharmonic pair $(u,v)$ on $\RR^N$ such that $u>0$ and
$v>0$. 
This example shows
that the claims a) and b), in  Remark 3 at page
323 of \cite{S4} are not true in general.
\end{example}

\begin{example}\label{example4.2}  This example was given by the second
author in \cite{EK0} and \cite{EK1}. Let $\Omega=[0,1[$ endowed by the topology induced by the usual one of $\RR$.
For any open subset $\omega$ of $\Omega$, we set

$${\cal H}_1(\omega)=\{u\in {\cal C}^2(\omega): (xu)"=0\}.$$
and
\begin{eqnarray*}
{\cal H}_2(\omega) = \{u\in {\cal C}(\omega): u(x)
=\frac{a}{x^2}+b \ {\rm on \ each \ connected \ component \ of} \
\omega, \ a,b\in \RR\},
\end{eqnarray*}
where ${\cal C}(\omega)$, resp. ${\cal C}^2(\omega)$, denotes the space
of continuous real functions, resp. real functions of class ${\cal C}^2$,  on
$\omega$ (if $0 \in \omega$ then each $u\in {\cal H}_2(\omega)$ is constant in the
component of 0).

Then we define a biharmonic sheaf on $\Omega$ by putting
$${\cal H}(\omega)=\{(u,v)\in \cal C^2(\omega): (xu)"=-v, v\in {\cal H}_2(\omega)\}$$
for any open subset $\omega$ of $\Omega$. It is easy to check
that $(\Omega,{\cal H})$ is a Brelot biharmonic space whose associated
harmonic spaces are $(\Omega,\cal H_1)$ and $(\Omega,\cal H_2)$.

Let $p$ and $q$ be the functions defined on $\Omega$ by
$p(x)=\frac{1}{2}-\frac{x}{2}$ and $q(x)=\min(1,v_0(x))$ where
$$v_0(x)=
\left\{\begin{array}{cc}
\frac{1}{x^2}-1 & {\rm if} \ x>0\\
+\infty & {\rm if} \ x=0.
\end{array}
\right.$$

The function $v_0$ is an $\cal H_2$-potential. Indeed, $v_0$ is an
$\cal H_2$-harmonic function on $]0,1[$ and, for any $\alpha\in
]0,1[$, if $k$ is a continuous function on $[0,\alpha]$,  $\cal
H_2$-harmonic on $\omega=[0,\alpha[$ and such that $k(\alpha)\le
v_0(\alpha)$, then   $k$ is a  constant function on $[0,\alpha]$ and
then we have  $k\le v_0$ on $\omega$. Hence $v_0$ is $\cal
H_2$-superharmonic $>0$ on $\Omega$. Let $h$ be a  $\cal H_2$-harmonic
function on $[0,1[$ such that $h\le v_0$, then  $h$ is necessarily a
constant function $\le 0$, thus $v_0$ is an $\cal H_2$-potential.

One can also easily check that the pair $(p,q)$ is a ${\cal H}$-potential.
Thus the biharmonic space $(\Omega, {\cal H})$ is strong. The
harmonic spaces  $(\Omega, {\cal H}_1)$ and $(\Omega, {\cal H}_2)$
are Green spaces,
however, $\{0\}$ is not the biharmonic support of any (extremal) pure
$\cal H$-potential. Indeed, $v_0$ is, up to the multiplication by some constant $>0$, the unique ${\cal H}_2$-potential of
harmonic support $\{0\}$. If the pure hyperharmonic function $u$ of order 2 associated
with  $v_0$ is ${\cal H}_1$-superharmonic, then the
pair $(u,v_0)$ would be ${\cal H}$-harmonic on $]0,1[$ according to
the Proposition \ref{prop3.8}, and then we would have $(xu)"(x)=-\frac{1}{x^2}+1$,
hence necessarily
$$u(x)=\frac{\ln(x)}{x}+\frac{x}{2}+a+\frac{b}{x},$$
where $a$ and $b$ are a real constants. But this is impossible since
 $u(x)<0$ for $x$ close to  $0$.

Let $(P,Q)$ be a continuous  ${\cal H}$-potential  on $\Omega$,
with $Q>0$ (and then $P>0$) in $\Omega$. Consider the biharmonic
sheaf ${\cal H}'$ on $\Omega$ defined by
$${\cal H}'(U)=\{(h,k)\in {\cal C}(U)\times {\cal C}(U):
(Ph,Qk)\in {\cal H}(U)\}$$
for any open subset $U$ of $\Omega$. Then $(1,1)\in {\cal H'}^*(\Omega)$, but
there is no pure  ${\cal H}'$-potential
of biharmonic support $\{0\}$. This
example shows that Theorem 2 of \cite[p. 321]{S4} is not true in general.
\end{example}

Let $(\Omega, \cal H)$ be a strong biharmonic space. We have seen that,
unlike the harmonic spaces admitting a potential, the set of all
points of $\Omega$ which are not the biharmonic support of any  extremal pure
potential is not always empty. In the case where $(\Omega,
{\cal H}_2)$ is a Green harmonic space, it was shown in \cite{S3} that this set is
nowhere dense (that is of empty interior)
in $\Omega$ (cf. \cite[Proposition 3.4]{S3}). In \cite{EK1}, the second author
has shown the  much more precise and general following result:

\begin{theorem}[{\cite[Theorem 3.1]{EK1}}]\label{thm4.3} Let $(\Omega,{\cal H})$
be a strong biharmonic space. Then the set of
points of $\Omega$ which are not the biharmonic support of extremal pure
potentials is ${\cal H}_2$-polar.
\end{theorem}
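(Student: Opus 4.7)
The plan is to reduce the statement to a pure harmonic-space fact about $(\Omega,\cal H_2)$, via the bijection between pure $\cal H$-potentials and $\cal H_2$-potentials afforded by the coupling kernel $V$. By Corollary \ref{cor3.6} there exists a finite continuous pure $\cal H$-potential $(p,q)>0$ on $\Omega$; by Corollary \ref{cor3.9}, its second component $q$ is then a strict (finite, continuous, $>0$) $\cal H_2$-potential. Theorem \ref{thm3.7} shows that every pure $\cal H$-potential has the form $(Vv,v)$, so $v\mapsto(Vv,v)$ is an additive, positively homogeneous, order-preserving bijection from the cone of $\cal H_2$-potentials $v$ with $Vv$ finite onto the cone of pure $\cal H$-potentials.

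To transfer extremality I would invoke Proposition \ref{prop3.5}: any decomposition of a pure $\cal H$-potential into a sum of two $\cal H$-potentials forces each summand to be pure, because the pure part $(u_i',v_i)$ of each summand $(u_i,v_i)$ satisfies $u_i=u_i'+t_i$ with $t_i\in\cal H_1^{*+}(\Omega)$, and the relation $u_1+u_2=u_1'+u_2'$ (pure parts being linear in $v$) forces $t_1=t_2=0$. Consequently, $v\mapsto(Vv,v)$ carries extremal rays to extremal rays. Proposition \ref{prop3.8} then shows that the biharmonic support of $(Vv,v)$ equals the $\cal H_2$-harmonic support of $v$, the reverse inclusion being immediate since the second component of any biharmonic pair is $\cal H_2$-harmonic. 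Hence the set in the theorem equals the set $E$ of points $y\in\Omega$ which are not the $\cal H_2$-harmonic support of any extremal $\cal H_2$-potential on $\Omega$.

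To show $E$ is $\cal H_2$-polar I would apply Choquet--Mokobodzki integral representation on the compact base $K=\{v\in\cal S_2^+(\Omega):v\le q\}$ of the cone of $\cal H_2$-potentials in the R.--M.~Herv\'e topology: $q$ is the barycenter of a probability measure $\mu$ carried by $\mathrm{ex}(K)$, and each extreme element $v$ in the support of $\mu$ has $\cal H_2$-harmonic support a single point $y(v)\in\Omega\setminus E$. One then invokes the classical fact that, in a Bauer harmonic space with countable base admitting a strict continuous potential, the set of points not arising as harmonic support of any extreme element of such a base is $\cal H_2$-polar.

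The main obstacle is this last harmonic-space step, which requires combining the integral representation of potentials with the polar/semipolar analysis of the associated cone in axiomatic potential theory; a single representation of the fixed potential $q$ does not exhibit all extremal supports, so one must either vary the representing potential over a suitable family or argue via resolvents. Once $E$ is known to be $\cal H_2$-polar, the bijection and the support identification above conclude the proof immediately.
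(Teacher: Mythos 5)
You should first note that the paper does not prove Theorem \ref{thm4.3} at all: it is imported verbatim from \cite{EK1}, and the only hint given about its proof is that it rests on the integral representation in the cone of nonnegative $\cal H$-superharmonic \emph{pairs}, i.e.\ a Choquet-type argument carried out directly in the two-component cone $\cal S^+(\Omega)$ — not a reduction to the single harmonic space $(\Omega,\cal H_2)$ as you propose.

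Your reduction has a genuine gap, and it sits exactly where the theorem is non-trivial. The map $v\mapsto (Vv,v)$ is a bijection only between those $\cal H_2$-potentials $v$ for which $Vv$ is superharmonic and the pure $\cal H$-potentials. Consequently the exceptional set of the theorem is \emph{not} your set $E$ of points which fail to be the harmonic support of an extremal $\cal H_2$-potential; it is the set of points $y$ for which either no such $q_y$ exists \emph{or} $q_y$ exists but the pure function $Vq_y$ fails to be superharmonic (on a Brelot space this means $Vq_y\equiv+\infty$ on a component). In the Green setting — the only setting in which your machinery (the map $y\mapsto q_y$, Proposition \ref{prop3.9}) is available — Herv\'e's theory makes your set $E$ empty, yet the theorem's exceptional set can be non-empty: Example \ref{example4.2} gives a strong Green biharmonic space in which $v_0=q_0$ is an extremal $\cal H_2$-potential with harmonic support $\{0\}$ while $\{0\}$ is not the biharmonic support of any pure potential. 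Thus your step ``the biharmonic support of $(Vv,v)$ equals the $\cal H_2$-harmonic support of $v$'' is legitimate only once $(Vv,v)$ is known to lie in $\cal S^+(\Omega)$, and the whole content of the theorem is that the set of $y$ where this fails is $\cal H_2$-polar; nothing in your argument touches that set, and the ``classical fact'' you defer to at the end concerns a different (and here empty) set. A repair has to work at the level of pairs, e.g.: take a finite continuous pure potential $(p,q)>0$ (Corollary \ref{cor3.6}); if the bad set were non-polar it would carry a nonzero measure $\nu$ with $\int q_y\,d\nu\le q$, and then the pure function associated with $\int q_y\,d\nu$, namely $\int Vq_y\,d\nu$, would be identically $+\infty$ while being dominated by the finite function $p=Vq$ — a contradiction. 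Your extremality-transfer argument via Proposition \ref{prop3.5} is fine as far as it goes, but it only operates on the unproblematic part of the correspondence.
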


The proof of this result is based on the integral representation in
the cone of nonnegative ${\cal H}$-superharmonic pairs (see
\cite{EK1}).

In \cite{S4} it is also studied the existence and the regularity of
the biharmonic Green kernel (\cite[Proposition 6]{S4}), however it seems also
that his result is not true in general.

A Green harmonic space $(\Omega,\cal K)$ is said to be \textit{symmetric} if there is a
continuous function $y\mapsto p_y$ from $\Omega$ onto the cone of $\cal
K$-superharmonic functions $\ge 0$ endowed with the   topology of R.-M.
Herv\'e such that for every $y\in \Omega$, the function $p_y$ is a
$\cal K$-potential of harmonic support $\{y\}$ and $p_y(x)=p_x(y)$ for any
pair $(x,y)\in \Omega^2$ (then we say that the kernel-function
$G(x,y)=p_y(x)$, the Green kernel of $(\Omega,\cal K)$, is
symmetric).

\begin{theorem}[{\cite[Proposition 3.3]{EK1}}]\label{thm3.10}
Let $(\Omega,\cal H)$ be a strong Green biharmonic space
whose associated harmonic spaces  $(\Omega, \cal H_1)$ and $(\Omega,\cal
H_2)$ are symmetric and equal and, for any $y\in \Omega$, let $q_y$
be a $\cal H_2$-potential with harmonic support $\{y\}$. Then for any $y\in
\Omega$, the pure hyperharmonic function  $p'_y$  of order 2
associated with $q_y$ is superharmonic.  Moreover, if the function
$G$ defined by $G(x,y)=q_y(x)$ is symmetric, then the function
$H(x,y)=p'_y(x)$ is symmetric.
\end{theorem}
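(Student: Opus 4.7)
The plan is to split the claim into two parts: (i) ${\cal H}_1$-superharmonicity of $p'_y$ for every $y\in\Omega$, and (ii) the symmetry of $H$ under the extra hypothesis on $G$; the first is the substantial part. I would first apply Proposition~\ref{prop3.9} to the Green harmonic space $(\Omega,{\cal H}_1)$, obtaining a Radon measure $\mu\ge 0$ with $Vf(x)=\int p_y(x) f(y)\,d\mu(y)$; and using the uniqueness axiom in ${\cal H}_2={\cal H}_1$, I would write $q_y=c(y) p_y$ for some $c(y)>0$, so that $p'_y = V q_y = c(y)\,V p_y$ and it suffices to treat $V p_y$. The symmetry $p_y(z)=G(y,z)=G(z,y)$ of the Green kernel then yields the manifestly symmetric representation
\[
V p_y(x) = \int_\Omega G(x,z)\,G(z,y)\,d\mu(z),
\]
from which $V p_y(x)=V p_x(y)$ is immediate.

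To establish (i), I would invoke Corollary~\ref{cor3.6} for a pure continuous finite ${\cal H}$-potential $(p_0,q_0)>0$, and write $q_0(z) = \int G(z,y)\,d\nu(y)$ for its integral representation as an ${\cal H}_2$-potential in the Green space $(\Omega,{\cal H}_2)$, with $\nu\neq 0$ a positive Radon measure. The crux is a Fubini swap: for every fixed $y_0\in\Omega$,
\begin{align*}
\int_\Omega V p_{y_0}(x)\,d\nu(x)
 &= \int_\Omega G(z,y_0)\Bigl(\int_\Omega G(x,z)\,d\nu(x)\Bigr)d\mu(z)\\
 &= \int_\Omega G(z,y_0)\,q_0(z)\,d\mu(z) = V q_0(y_0) = p_0(y_0)<+\infty,
\end{align*}
where the symmetry of $G$ is used to identify $\int G(x,z)\,d\nu(x) = q_0(z)$. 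Since $\nu\neq 0$, this yields $V p_{y_0}(x_0)<+\infty$ for some $x_0\in\Omega$. Combining this with the fact that $V p_{y_0}$ is ${\cal H}_1$-hyperharmonic (Theorem~\ref{thm3.7}) on the connected Brelot harmonic space $\Omega$, the classical Brelot alternative --- a nonnegative hyperharmonic function is either identically $+\infty$ on a connected component or superharmonic --- forces $V p_{y_0}$, hence $p'_{y_0}$, to be ${\cal H}_1$-superharmonic. For (ii), the assumed symmetry of $G(x,y)=q_y(x)$ combined with $q_y=c(y) p_y$ and $p_y(x)=p_x(y)$ forces $c$ to be a positive constant, so $H(x,y) = c\int G(x,z) G(z,y)\,d\mu(z)$ is symmetric in $(x,y)$.

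The principal obstacle is setting up the Fubini identity so that the two measures involved --- $\mu$ from Proposition~\ref{prop3.9} (carrying the biharmonic coupling) and $\nu$ from the integral representation of the potential $q_0$ --- interact cleanly. The identification $\int G(x,z)\,d\nu(x)=q_0(z)$ and the symmetry of the expression for $V p_y$ are precisely what the hypothesis ``${\cal H}_1={\cal H}_2$ symmetric'' buys; the counterexamples in Section~\ref{section7} show that without this compatibility the pure function $p'_y$ can fail to be superharmonic altogether.
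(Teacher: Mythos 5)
The paper does not actually prove this statement: it is imported verbatim as \cite[Proposition 3.3]{EK1}, so there is no in-text argument to compare yours against. Judged on its own, your reconstruction is correct and uses only tools available in the paper: the integral representation $Vf(x)=\int p_y(x)f(y)\,d\mu(y)$ of Proposition \ref{prop3.9}, the identity $p'_y=Vq_y$ from Theorem \ref{thm3.7}, the pure finite continuous potential $(p_0,q_0)>0$ of Corollary \ref{cor3.6} with $p_0=Vq_0$, and the Brelot alternative on the connected space $\Omega$. The Fubini--Tonelli computation $\int Vp_{y_0}\,d\nu = Vq_0(y_0)=p_0(y_0)<+\infty$ is the right mechanism for ruling out $p'_{y_0}\equiv+\infty$, and the symmetry of $H$ does follow once you show $c$ is constant. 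Two small points deserve care in a write-up. First, you overload $G$: in the statement $G(x,y)=q_y(x)$, whereas in your displays $G$ is the $\cal H_1$-Green kernel $p_y(x)$; the identification $\int G(x,z)\,d\nu(x)=q_0(z)$ is only clean if you take Herv\'e's representation of $q_0$ with respect to the \emph{symmetric} family $(p_y)$ itself (legitimate, since $\cal H_1=\cal H_2$), rather than with respect to $(q_y)$, where the factor $c(y)$ would intervene. Second, the deduction that $c$ is constant from $c(y)p_y(x)=c(x)p_y(x)$ uses that $p_y(x)\in\,]0,+\infty[$ for $x\neq y$; this holds because a nonzero nonnegative superharmonic function on a connected Brelot space is strictly positive and $p_y$ is harmonic off $\{y\}$, but it should be said. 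With those clarifications the argument is complete.
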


If the harmonic spaces  $(\Omega,\cal
H_1)$ and $(\Omega,\cal H_2)$ are symmetric but not equal, the
function $p'_y$  is not always a  $\cal H_1$-superharmonic
function for any $y\in \Omega$, contrary to the claim
in \cite{S4}, which is not true
in general as it is shown by the following example:

\begin{example} Let $(\Omega,\cal H)$ be the biharmonic space of the example
\ref{example4.2}. For any $y\in \Omega$ consider the functions
$$p_y(x)=
\left\{\begin{array}{c}
\frac{1}{y}-1 \ {\rm if} \ 0\le x\le y\\
\frac{1}{x}-1 \ {\rm if} \ y\le x<1,
\end{array}
\right.
$$
and
$$q_y(x)=
\left\{\begin{array}{c}
\frac{1}{y^2}-1 \ {\rm if} \ 0\le x\le y\\
\frac{1}{x^2}-1 \ {\rm if} \ y\le x<1,
\end{array}
\right.
$$
with the convention $\frac{1}{0}=+\infty$. Then, for each $y\in \Omega$, every  $\cal
H_1$-potential, resp. $\cal H_2$-potential, of harmonic support $\{y\}$ is
proportional to $p_y$, resp. $q_y$. Put $G_1(x,y)=p_y(x)$ and
$G_2(x,y)=q_y(x)$ for any pair $(x,y)\in \Omega^2$. The functions
$G_1$ and $G_2$ are continuous on $\Omega\times \Omega$ and, for any
 $y\in \Omega$ and any $i=1,2$, the function $G_i(.,y)$ is an $\cal
H_i$-potential $\cal H_i$-harmonic on $\Omega\smallsetminus\{y\}$. Thus
$G_1$ and $G_2$ are  (the) Green kernels of the harmonic
spaces $(\Omega,\cal H_1)$ and $(\Omega,\cal H_2)$ respectively.  It is clear that
$G_1$ and $G_2$ are symmetric. However the pure hyperharmonic
function of order 2 associated with $q_0$ is identically equal to $+\infty$. This example shows also that if the
spaces $(\Omega,\cal H_1)$ and $(\Omega,\cal H_2)$ are symmetric but
not necessarily equal, the biharmonic Green kernel is not necessarily
defined.
\end{example}

\section{The adjoint biharmonic space}\label{section8}

The  notion of adjoint harmonic functions   in a Green harmonic space
was introduced and
studied by R.-M. Herv\'e in \cite[Chap. VI]{He} to which we refer for the definition
and the properties of these functions.

An attempt
to consider and study the adjoint theory for
biharmonic functions was made in \cite{S5}.
However, the results of  \cite{S5} are based on
incomplete results as
we have  pointed out through the examples \ref{example4.1} and
\ref{example4.2}.  In this section we shall show that we can define correctly and
precisely a theory of  adjoint biharmonic space of a Green biharmonic space
$(\Omega,\cal H)$ in the case  where
this space has a "regular" biharmonic Green kernel (condtion ({\bf 6.3}) below). We
shall also show that this last condition is necessary for the existence of the biharmonic
adjoint of $(\Omega,\cal H)$.

The incomplete construction of the adjoint biharmonic space  in
\cite{S5} is based on the Theorem 2 of \cite{S4} which is not true
in general according to the example \ref{example4.2} of Section \ref{section7}.
To define correctly the adjoint biharmonic space, we shall proceed
in a different way.

In this section $(\Omega,\cal H)$  is a Green biharmonic space and
the c\^one $\cal S_j^+(\Omega)$, $j=1,2$, is equipped with the topology of R.-M. Herv\'e
(see \cite[Chap. IV]{He}).
For any $y\in \Omega$, we consider an $\cal H_1$-potential $p_y$ and
an $\cal H_2$-potential $q_y$ of harmonic support $\{y\}$ such that the
functions $y\mapsto p_y$ and $y\mapsto q_y$ from $\Omega$ into $\cal
S_1^+(\Omega)$ and $\cal S_2^+(\Omega)$ respectively are continuous
(cf. \cite[Theorem 18.1 and Proposition 18.1, p.p. 479-480]{He}),
and we denote by $p'_y$ the pure $\cal H_1$-hyperharmonic function
associated with $q_y$.

\begin{prop}\label{prop6.1}
The following assertions are equivalent:

{\rm 1.} The map $y\mapsto p'_y$ is continuous on $\Omega$.

{\rm 2.} For any $x\in \Omega$, the function $y\mapsto p'_y(x)$ is continuous
on $\Omega\smallsetminus \{x\}$.
\end{prop}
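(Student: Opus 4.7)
The plan is to establish the equivalence by proving the two directions separately. The direction (1)$\Rightarrow$(2) is essentially a translation of Hervé-topology convergence into pointwise convergence away from the (common) harmonic supports; the direction (2)$\Rightarrow$(1) will require a compactness argument together with the Green (uniqueness) hypothesis to identify subsequential limits.

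For (1)$\Rightarrow$(2), the first observation is that the pair $(p'_y,q_y)$ is pure by construction, and $q_y$ is $\cal H_2$-harmonic on $\Omega\setminus\{y\}$, so by Proposition \ref{prop3.8} the pair $(p'_y,q_y)$ is $\cal H$-biharmonic on $\Omega\setminus\{y\}$; in particular the $\cal H_1$-harmonic support of $p'_y$ is contained in $\{y\}$. Now fix $x\in\Omega$ and $y_0\in\Omega\setminus\{x\}$, and choose a relatively compact open neighbourhood $U$ of $y_0$ whose closure avoids $x$. For $y\in U$, the harmonic support of $p'_y$ lies in $\overline U$. I will then invoke the standard fact in R.-M. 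Hervé's theory that, for a net of $\cal H_1$-potentials whose harmonic supports all lie in a fixed compact $K$, Hervé convergence implies locally uniform (hence pointwise) convergence on $\Omega\setminus K$. Applied with $K=\overline U$, this immediately yields $p'_y(x)\to p'_{y_0}(x)$ as $y\to y_0$.

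For (2)$\Rightarrow$(1), fix $y_0\in\Omega$ and a sequence $y_n\to y_0$, and consider an arbitrary subsequential Hervé-limit $s$ of $(p'_{y_n})$. Relative compactness of this sequence in the compact base of $\cal S_1^+(\Omega)$ has to be checked; I plan to do this via a domination argument, using the continuity of $y\mapsto q_y$ in $\cal S_2^+(\Omega)$ and the fact that $p'_y=V(q_y)$ for the coupling kernel $V$ of Theorem \ref{thm3.7}, so that a uniform majorant for the $q_{y_n}$ produces a uniform majorant for the $p'_{y_n}$ in terms of a fixed continuous $\cal H_1$-potential. Then, applying the (1)$\Rightarrow$(2) implication to the convergent subsequence, we get pointwise convergence $p'_{y_{n_k}}(x)\to s(x)$ for $x\neq y_0$; combined with hypothesis (2), this forces $s(x)=p'_{y_0}(x)$ on $\Omega\setminus\{y_0\}$. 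Both $s$ and $p'_{y_0}$ are $\cal H_1$-superharmonic with harmonic support contained in $\{y_0\}$ (the former as a Hervé limit of potentials with supports collapsing to $y_0$, the latter by Remark \ref{remark3.1} and Proposition \ref{prop3.8}); the Green hypothesis on $(\Omega,\cal H_1)$ makes them proportional, and their coincidence off $\{y_0\}$ gives $s=p'_{y_0}$. As every Hervé-subsequential limit equals $p'_{y_0}$ and the relevant normalized subset is compact metrizable, this yields Hervé convergence $p'_{y_n}\to p'_{y_0}$, proving (1).

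The main obstacle is the relative-compactness step in the reverse direction: one must produce, from continuity of $y\mapsto q_y$ in Hervé topology, a fixed $\cal H_1$-superharmonic majorant for $(p'_{y_n})$ valid on all of $\Omega$, so as to place the sequence inside the compact base of $\cal S_1^+(\Omega)$. The other subtle point is the identification of $s$ as a genuine element of $\cal S_1^+(\Omega)$ with harmonic support $\{y_0\}$, which is where the uniqueness clause in the Green biharmonic space hypothesis is used in an essential way.
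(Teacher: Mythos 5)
Your argument in both directions rests on the claim that the $\cal H_1$-harmonic support of $p'_y$ is contained in $\{y\}$, deduced from Proposition \ref{prop3.8}. This is false, and it is the crux of the matter. Proposition \ref{prop3.8} gives that the \emph{pair} $(p'_y,q_y)$ is $\cal H$-biharmonic on $\Omega\smallsetminus\{y\}$, i.e.\ $p'_y=\int p'_y\,d\mu_\centerdot^\omega+\int q_y\,d\nu_\centerdot^\omega$ on regular $\omega$ there; since $q_y>0$ and $\nu_\centerdot^\omega\neq 0$, the function $p'_y$ is \emph{strictly} $\cal H_1$-superharmonic wherever $q_y>0$, so its $\cal H_1$-harmonic support is essentially all of $\Omega$, not $\{y\}$. (Concretely, for the bilaplacian $p'_y(x)=\int G(x,z)G(z,y)\,dz$ has Riesz measure $G(\cdot,y)\,d\lambda$ spread over the whole space.) Consequently, in (1)$\Rightarrow$(2) you cannot invoke the fact about Herv\'e convergence of potentials with harmonic supports in a fixed compact $K$ implying locally uniform convergence off $K$: there is no such $K$ avoiding $x$. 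And in (2)$\Rightarrow$(1) the identification of the subsequential limit $s$ with $p'_{y_0}$ via proportionality of point-supported potentials collapses, since neither $s$ nor $p'_{y_0}$ has point harmonic support, so the uniqueness hypothesis on $(\Omega,\cal H_1)$ gives you nothing here.

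The paper routes around both problems by working with the pairs rather than the first components alone. For (1)$\Rightarrow$(2) it uses the Harnack-type equicontinuity of \emph{biharmonic pairs} (Smyrnelis, together with Ascoli--Arzel\`a) applied to $(p'_{y_n},q_{y_n})$ on $\Omega\smallsetminus\{y\}$, where these pairs are $\cal H$-biharmonic, to extract a locally uniformly convergent subsequence. The limit is then identified not by proportionality but by minimality: the pair $(\widehat{\lim\inf}\,p_j,\,q_y)$ is $\cal H$-superharmonic on all of $\Omega$, so its first component dominates the pure function $p'_y$; equality at a single point $x_0\neq y$ plus the minimum principle for the nonnegative $\cal H_1$-superharmonic difference on the domain $\Omega\smallsetminus\{y\}$ forces equality everywhere. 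The same minimality-plus-one-point-of-contact mechanism, combined with hypothesis 2 supplying the pointwise identification off a point, closes the direction (2)$\Rightarrow$(1). If you replace your support/proportionality step by this domination argument (and your locally-uniform-convergence step by the biharmonic Harnack property), the skeleton of your proof can be repaired; as written, both key steps fail.
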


\begin{proof}

1.$\Rightarrow$2.
Suppose that the map $y\mapsto p'_y$ is continuous on $\Omega$
and let $x\in \Omega$ and $(y_n)$ a sequence of points
$\Omega\smallsetminus\{x\}$ converging to $y\in \Omega\smallsetminus \{x\}$.
Then we have $\lim\widehat{\inf} p'_{y_n}=p'_y$
and $\lim\widehat{\inf} q_{y_n}=q_y$, hence there exists $x_0\in \Omega\smallsetminus\{y\}$
and a subsequence $(y_{n_k})$ of $(y_n)$ such that $\lim p'_{y_{n_k}}(x_0)=p'_y(x_0)$
and $\lim q_{y_{n_k}}(x_0)=q_y(x_0)$. Then it follows from
\cite[Proposition 1.8, p. 301]{S6} and Ascoli-Arzela's Theorem that there is a subsequence
$(p_j,q_j)$ of $((p_{y_,'n_k},q_{n_k}))$ which converges locally uniformly
on $\Omega\smallsetminus \{y\}$ to an $\cal H$-superharmonic pair $(p,q_y)$, where $p$ is
a continuous function on $\Omega\smallsetminus \{y\}$.
Hence $p=\lim\widehat{\inf}p_j$ on $\Omega\smallsetminus\{y\}$ and therefore on $\Omega$.
Since the pair $(\lim\widehat{\inf}p_j,q_y)$ is $\cal H$-superharmonic on $\Omega$,
we have $p\ge p'_y$ on $\Omega\smallsetminus\{y\}$. It follows that
$p=p'_y$ since $p-p'_y$ is a nonnegative $\cal H_1$-superharmonic
on the domain $\Omega\smallsetminus \{y\}$ and vanishes  at $x_0$. In particular
we have $p(x)=p'_y(x)$, so that $\lim p_j(x)= p'_y(x)$. It follows that
for any $x\in \Omega\smallsetminus \{y\}$,
the sequence $(p'_{y_n}(x))$ converges to $p'_y(x)$. This proves the assertion 2.

2.$\Rightarrow$1. Let $x\in \Omega$ and suppose that the function $y\mapsto p'_y(x)$ is
continuous on $\Omega\smallsetminus\{x\}$. Let
$(y_n)$ be a sequence of points of $\Omega$ converging to $y$
and
$(y_{n_k})$ a subsequence of $(y_n)$ such that the sequence
$(p'_{y_{n_k}})$ converges in $\cal S_1^+(\Omega)$, we have $\lim\inf p'_{y_{n_k}}(x)=p_y(x)$ .
On the other hand the pair $(\lim\widehat{\inf}p'_{y_{n_k}}, q_y)$ is $\cal H$-hyperharmonic
on $\Omega$, hence $\lim\widehat{\inf}p'_{y_{n_k}}\ge p'_y$. It follows then that
$\lim\widehat{\inf}p'_{y_{n_k}}=p'_y$ on $\Omega\smallsetminus\{x\}$, hence on all of $\Omega$.
We conclude that the map $\mapsto p'_y$ is continuous on $\Omega$.
\end{proof}

The next proposition and its corollary give some concrete conditions under
which the map $y\mapsto p'_y$ is continuous.

\begin{definition}
A (function) kernel  on $\Omega$ is a l.s.c.
function $G: \Omega\times \Omega \rightarrow [0,+\infty]$.
The kernel $G$ on $\Omega$ is said to be regular
if, for any measure $\mu$ on $\Omega$ with compact support $K$, the function $G\mu$
is continuous on $\Omega$ if its restriction to $K$ is continuous.
\end{definition}

In a Green harmonic space $(X,\cal K)$, it follows by \cite[p. 521 and Th\'eor\`eme 25.1, p. 522]{He},
that the Green kernel of $X$ is regular if and only if $(X,\cal K)$ satisfies the axiom D
(axiom of domination, see \cite[Chapitre V]{He}).

In the following we denote by $G$ the kernel defined on $\Omega\times \Omega$ by $G(x,y)=p_y(x)$.

\begin{prop}\label{prop6.2}
Suppose that for any $y\in \Omega$, the  function $p'_y$
associated with $q_y$ is superharmonic, and that the kernel $G$ is regular.
Then the map $y\mapsto p'_y$ is continuous from $\Omega$ into $\cal S_1^+(\Omega)$.
\end{prop}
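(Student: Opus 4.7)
The plan is to verify condition 2 of Proposition \ref{prop6.1}, namely that for each $x \in \Omega$ the function $y \mapsto p'_y(x)$ is continuous on $\Omega \setminus \{x\}$. I would fix such an $x$ and a sequence $y_n \to y_0$ in $\Omega \setminus \{x\}$, and apply Proposition \ref{prop3.9} to the Green harmonic space $(\Omega, \cal H_1)$ to obtain a Radon measure $\mu \ge 0$ on $\Omega$ representing the coupling kernel: $Vf(z) = \int G(z,\zeta)\, f(\zeta)\, d\mu(\zeta)$ for every $f \in \calb^+(\Omega)$. Combining with $p'_y = V(q_y)$ from Theorem \ref{thm3.7} gives the fundamental representation
$$p'_y(x) = \int G(x,\zeta)\, q_y(\zeta)\, d\mu(\zeta),$$
so that each $p'_y$ is the Green potential $G\sigma_y$ of the measure $\sigma_y := q_y\cdot\mu$. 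The task reduces to showing $y \mapsto G\sigma_y$ is R.-M. Herv\'e continuous.

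For the lower estimate, the R.-M. Herv\'e continuity of $y \mapsto q_y$ yields $\widehat{\liminf}_n q_{y_n} = q_{y_0}$, hence $\liminf_n q_{y_n}(\zeta) \ge q_{y_0}(\zeta)$ at every point $\zeta$. Fatou's lemma applied to the positive measure $G(x,\centerdot)\,d\mu$ then gives $\liminf_n p'_{y_n}(x) \ge p'_{y_0}(x)$, and taking the l.s.c. regularization yields $\widehat{\liminf}_n p'_{y_n} \ge p'_{y_0}$.

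For the matching upper estimate I would use compactness of $\cal S_1^+(\Omega)$ in the R.-M. Herv\'e topology: any subsequence of $(p'_{y_n})$ admits a further subsequence $(p'_{y_{n_k}})$ converging to some $P \in \cal S_1^+(\Omega)$. The hypothesis that each $p'_y$ is superharmonic allows one to pass to the limit in the pair $(p'_{y_{n_k}}, q_{y_{n_k}})$, producing an $\cal H$-superharmonic pair $(P, q_{y_0})$. Writing $P = G\tau$ via Riesz representation (valid since $P$ is a limit of pure potentials, hence itself a potential by Remark \ref{remark3.1}), the off-diagonal continuity of the $\cal H_2$-Green kernel and the compactness of $(y_{n_k})$ give vague convergence $\sigma_{y_{n_k}} \to \sigma_{y_0}$ on $\Omega \setminus \{y_0\}$. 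The regularity of $G$---equivalent by the remark to axiom D for $(\Omega, \cal H_1)$---is then used to conclude that the vague convergence of representing measures passes to the Green potentials, identifying $\tau$ with $\sigma_{y_0}$ and giving $P = G\sigma_{y_0} = p'_{y_0}$. Since every subsequence of $(p'_{y_n})$ admits a sub-subsequence with this same limit, the full sequence converges to $p'_{y_0}$ in R.-M. Herv\'e topology.

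The main obstacle is this last step: converting vague convergence of the representing measures $\sigma_{y_{n_k}}$ into R.-M. Herv\'e convergence of the associated Green potentials $G\sigma_{y_{n_k}}$. Without axiom D on $(\Omega, \cal H_1)$---i.e.\ without regularity of $G$---the potential operator need not be continuous on measures whose mass can concentrate near a moving pole, and the upper estimate may fail; this is the essential role of the regularity hypothesis on $G$.
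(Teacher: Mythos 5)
Your skeleton is the same as the paper's: represent $p'_y=G(q_y\cdot\mu)$ via Proposition \ref{prop3.9}, derive convergence of the representing measures from the continuity of $y\mapsto q_y$, and let the regularity of $G$ transfer that convergence to the potentials. But the decisive step --- the one you yourself flag as ``the main obstacle'' --- is asserted rather than proved, and as you have set it up it does not close. You only obtain vague convergence of $\sigma_{y_{n_k}}=q_{y_{n_k}}\mu$ on $\Omega\smallsetminus\{y_0\}$; with that alone the limit measure $\tau$ may carry an extra atom at $y_0$, so that $P=G\tau=p'_{y_0}+c\,p_{y_0}$ with $c>0$. This is perfectly compatible with everything you have established (such a $P$ is a superharmonic majorant of $p'_{y_0}$ whose pair with $q_{y_0}$ is still $\cal H$-superharmonic), and ruling it out is exactly the content of the missing upper estimate. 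A secondary gap: the Riesz representation $P=G\tau$ presupposes that the Herv\'e limit $P$ of the pure potentials is itself an $\cal H_1$-potential; Remark \ref{remark3.1} gives this for each $p'_{y_{n_k}}$, not for the limit.

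The paper sidesteps the whole subsequence apparatus. It first establishes weak convergence of $q_{y_n}\mu$ to $q_{y_0}\mu$ on all of $\Omega$ (this is where the continuity of $V\varphi$ for $\varphi\in\cal C_c^+(\Omega)$ enters), and then invokes Brelot's theorem on regular kernels (Theorem 3', p.~40 of \cite{Br}), which says precisely that for a regular kernel, vague convergence of the measures gives $\lim\widehat{\inf}\,G\mu_n=G\mu$. That single citation is the entire upper bound, and it is the one ingredient your argument lacks: you should either quote that theorem (after upgrading your vague convergence to all of $\Omega$) or reprove it. Your Fatou lower bound is fine, but note that once you have Herv\'e convergence of the full sequence you have verified condition 1 of Proposition \ref{prop6.1} directly, so the detour through condition 2 is unnecessary.
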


\begin{proof}
We may suppose that $(1,1)$ is a pure  $\cal H$-potential.
By Proposition \ref{prop3.4} the coupling kernel $V$ is given by $Vf(x)=\int G(x,y)f(y)d\mu(y)$
for any $f\in \cal B^+(\Omega$ and $x\in \Omega$, where $\mu$ is the (unique)
Radon measure $\ge 0$ on $\Omega$ such that $1=\int G(\centerdot,y)d\mu(y)$. Let $(y_n)$ be a sequence of points of $\Omega$ converging to
$y\in \Omega$. For any  $\varphi\in \cal C_c^+(\Omega)$, the function
$V(\varphi)$ is continuous, hence the sequence of measures
$q_{y_n}.\mu$ on $\Omega$ converges weakly to $q_y.\mu$.
Now we have $p'_{y_n}=G\mu_n$ for every integer $n$, and hence
$\lim p'_{y_n}=\lim{\widehat{\inf}} G\mu_n=G\mu=p'_y$ by
\cite[Theorem 3', p. 40]{Br}.
\end{proof}

\begin{cor}\label{cor6.3}
If the harmonic space $(\Omega,\cal H_2)$ satisfies the axiom of domination (or Axiom D,
see \cite[Chapitre V]{He}), then

{\rm 1.} The map $y\mapsto p'_y$ from $\Omega$ into $\cal S_2^+(\Omega)$ is continuous.

{\rm 2.} For any $x\in \Omega$, the function $y\mapsto p'_y(x)$ is continuous on $\Omega\smallsetminus \{x\}$.
\end{cor}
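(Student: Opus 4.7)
The plan is to obtain both assertions from a direct application of Propositions \ref{prop6.1} and \ref{prop6.2}. By Proposition \ref{prop6.1}, assertions (1) and (2) are equivalent, so it is enough to prove the continuity of the map $y\mapsto p'_y$ into $\cal S_1^+(\Omega)$; assertion (2) will then follow immediately.

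To invoke Proposition \ref{prop6.2}, I must verify its two hypotheses: the regularity of the kernel $G(x,y)=p_y(x)$, and the $\cal H_1$-superharmonicity of each $p'_y$. The regularity is supplied by the equivalence recalled just before Proposition \ref{prop6.2} (Hervé, Théorème 25.1): in a Green harmonic space the Green kernel is regular if and only if the space satisfies Axiom D. Hence Axiom D delivers the regularity of $G$ directly, with no extra work.

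For the $\cal H_1$-superharmonicity of each $p'_y$, Example \ref{example4.2} shows that this property can fail in general (there $p'_0\equiv +\infty$), so Axiom D must carry real content here. The natural approach is to write $p'_y=V(q_y)$, where $V$ is the coupling kernel, and approximate $q_y$ from below by an increasing sequence of bounded continuous $\cal H_2$-potentials $q_y^{(n)}$ with compact support disjoint from $y$ (for instance by truncating $q_y$ outside shrinking neighborhoods of $y$). By Theorem \ref{thm3.7} each $V(q_y^{(n)})$ is a finite continuous $\cal H_1$-potential, $\cal H_1$-harmonic off $S(q_y^{(n)})$. Passing to the supremum and using the domination principle afforded by Axiom D on $(\Omega,\cal H_2)$ to control the concentration of $q_y$ near its pole, one obtains that the limit $p'_y$ remains finite on a dense subset of $\Omega\smallsetminus\{y\}$, which is exactly $\cal H_1$-superharmonicity.

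The main obstacle is precisely this last step: converting Axiom D on the component $(\Omega,\cal H_2)$ into the finiteness of $V(q_y)$ off $\{y\}$, since the pathology of Example \ref{example4.2} must be ruled out by Axiom D alone. Once the superharmonicity of each $p'_y$ is in hand, Proposition \ref{prop6.2} yields assertion (1), and Proposition \ref{prop6.1} then gives assertion (2), completing the proof.
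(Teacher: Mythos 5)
Your main line of argument is exactly the paper's: Axiom~D gives the regularity of the kernel $G(x,y)=p_y(x)$ via the equivalence recalled before Proposition~\ref{prop6.2} (Herv\'e, Th\'eor\`eme 25.1), Proposition~\ref{prop6.2} then gives assertion~(1), and Proposition~\ref{prop6.1} converts (1) into (2). The paper's proof is precisely this one-liner and nothing more.

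The difference is that you correctly observe that Proposition~\ref{prop6.2} has a \emph{second} hypothesis --- that each $p'_y$ be $\cal H_1$-superharmonic --- and you try to derive it from Axiom~D. That step is a genuine gap in your submission: what you offer is only a sketch (truncate $q_y$, apply $V$, ``use the domination principle to control the concentration near the pole''), and you yourself label its key point ``the main obstacle'' without resolving it. As written, the truncation argument does not produce finiteness of $V(q_y)=\int G(\cdot,z)\,q_y(z)\,d\mu(z)$ off $\{y\}$: the regularity of $G$ concerns measures with compact support and continuous potentials on that support, and says nothing a priori about the integrability of $q_y$ against $\mu$ near the pole, which is exactly what fails in Example~\ref{example4.2} (where $p'_0\equiv+\infty$). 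So the proposal is incomplete at that point. It is worth saying plainly that the paper's own proof does not verify this hypothesis either --- it invokes Proposition~\ref{prop6.2} as if regularity of $G$ were its only assumption --- so you have put your finger on a real weakness of the corollary as stated (one should either add the superharmonicity of the $p'_y$ as a hypothesis, or supply an argument that Axiom~D forces it); but flagging the difficulty is not the same as closing it, and your write-up does not close it. A smaller point: the kernel $G$ whose regularity is used is the Green kernel of $(\Omega,\cal H_1)$, while the stated hypothesis is Axiom~D for $(\Omega,\cal H_2)$; you (like the paper) pass over this mismatch without comment.
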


\begin{proof}
If the harmonic space $(\Omega,\cal H_2)$ satisfies the axiom D, then kernel
$G$ is regular as mentioned above, hence the assertion 1. and 2. hold
according to Proposition \ref{prop6.1} and Proposition \ref{prop6.2}.
\end{proof}


To consider  an adjoint  space, we shall proceed
differently to \cite{S5}. However, to do this, we need supplementary assumptions
on  the given biharmonic space $(\Omega,\cal H)$:


{\bf 6.1.} There is a base $\cal V$ of open subsets of $\Omega$
that are $\cal H_i$-completely determining for all $i=1,2$ (see Definition
of completely determining open subset of a Brelot harmonic space
in \cite[no 11, p. 449]{He}).

{\bf 6.2.} We suppose that the pair $(1,1)$ is a finite and
continuous $\cal H$-potential (which is not really a restriction
since we can always reduce the situation to this case by considering the
$(f,g)$-harmonic pairs, where $(f,g)$ is a continuous $\cal H$-potential
pair $>0$ on $\Omega$).\\

Let  $V$ be the coupling kernel of  $(\Omega,\cal
H)$. We denote by $\mu$ the measure
representing the pure hyperharmonic function  $V1$ of order 2
associated with the  $\cal H_2$-superharmonic constant function which equals 1, that is:
$$V1=\int p_yd\mu(y).$$
We recall that, according to the Proposition \ref{prop3.9}, the kernel
$V$ is given by
$$Vf(x)=\int p_y(x)f(y)d\mu(y), \forall x\in \Omega \ {\rm and} \ \forall f\in \cal B_+(\Omega).$$

We shall couple the biharmonic spaces $(\Omega,^*\cal H_2)$ and $(\Omega,^*\cal H_1)$ so that
in the obtained biharmonic space, the function $p'_\centerdot(y)$ is the pure hyperharmonic function
of order 2 associated with the $^*\cal H_1$-potential $p_y^*$
for any $y\in \Omega$. To do this we shall use the coupling defined in the
proof of Theorem \ref{thm3.16} relatively to the kernel $V^*$ defined by
$$V^*f(x)=\int q_x(y)f(y)d\mu(y), \ \forall f\in \cal B_+(\Omega),$$
so that we have, in particular,

\begin{equation}\label{eq7.1}
V^*(p^*_y)(x)=V^*(p_\centerdot(y))(x)=\int p_x(z)q_z(y)d\mu(z)=p'_x(y), \ \forall x,y\in \Omega.
\end{equation}
The kernel $V^*$ will be called the ``adjoint" kernel associated with $V$.


The conditions {\bf 6.1}, {\bf 6.2}
mean that the kernel $V^*$ satisfies some of  the hypotheses of
Theorem \ref{thm3.16}. In order $V^*$
is the coupling kernel, it needs to satisfy the following condition:

{\bf 6.3.} For any function $\varphi\in \cal C_c^+(\Omega)$, the
function $V^*\varphi$ is finite and continuous on $\Omega$.

We do not know if the condition {\bf 6.3.} is always satisfied
(under the hypotheses of the present section).
However we have the following


\begin{prop}\label{prop8.1}
If for any $y\in \Omega$, the function $p'_y$ is superharmonic and
if the function $y\mapsto p'_y$ from $\Omega$ into $\cal
S_2^+(\Omega)$ is continuous, then the  condition {\bf 6.3} is
satisfied.
\end{prop}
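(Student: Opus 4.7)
The plan is to reinterpret $V^*\varphi$ as the integral of $\varphi$ against the Radon measure representing the $\cal H_1$-potential $p'_y$, and then derive the desired finiteness and continuity from the R-M Hervé continuity of $y \mapsto p'_y$.

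First I would note that, under the normalization {\bf 6.2}, Proposition \ref{prop3.9} and Proposition \ref{prop3.4} give $p'_y = V(q_y) = \int p_\centerdot(z)\,q_y(z)\,d\mu(z)$, so that the Radon measure representing the $\cal H_1$-potential $p'_y$ in the $V$-representation is exactly $q_y\,\mu$. On the other hand, by the very definition of $V^*$,
$$V^*\varphi(y) = \int q_y(z)\,\varphi(z)\,d\mu(z) = \int \varphi\,d(q_y\mu)$$
for every $\varphi \in \cal C_c^+(\Omega)$. Hence proving {\bf 6.3} reduces to showing that the map $y \mapsto \int \varphi\,d(q_y\mu)$ is finite and continuous on $\Omega$.

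The key step is to invoke the integral representation theory of \cite[Chap. IV]{He}: the correspondence $\nu \leftrightarrow V\nu$ relates vague convergence of Radon measures with R-M Hervé convergence of $V$-potentials. Since by hypothesis $y \mapsto p'_y = V(q_y\mu)$ is continuous into $\cal S_1^+(\Omega)$, this translates into vague continuity of $y \mapsto q_y\mu$. Concretely, if $y_n \to y$ in $\Omega$, then $\int \psi\,d(q_{y_n}\mu) \to \int \psi\,d(q_y\mu)$ for every $\psi \in \cal C_c(\Omega)$, and the special case $\psi = \varphi$ gives $V^*\varphi(y_n) \to V^*\varphi(y)$, which is the sought continuity.

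Finiteness of $V^*\varphi(y)$ will follow from the Radon property of $q_y\,\mu$. Since $p'_y$ is assumed to be genuinely $\cal H_1$-superharmonic, there exists $x_0\in \Omega$ with $p'_y(x_0)<+\infty$; choosing $x_0$ outside the support of $\varphi$ and distinct from $y$, the continuity and strict positivity of $z \mapsto p_z(x_0)$ away from $x_0$ (inherited from the continuity of $y\mapsto p_y$) yield, on any compact $K\subset \Omega\smallsetminus\{x_0\}$, the estimate
$$\int_K q_y(z)\,d\mu(z)\le \Big(\min_{z\in K} p_z(x_0)\Big)^{-1}\,p'_y(x_0)<+\infty,$$
so $q_y\,\mu$ is locally finite and the integral $\int \varphi\,d(q_y\mu)$ is finite. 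The main obstacle will be justifying rigorously the passage from R-M Hervé continuity of $V(q_y\mu)$ to vague continuity of $q_y\mu$; this amounts to invoking the appropriate form of Hervé's representation theorem, which applies here because the representing measures $q_y\mu$ form a locally uniformly bounded family on compacts avoiding $y$. Should a direct appeal not suffice, an alternative is to argue pointwise by combining Proposition \ref{prop6.1} (which gives off-diagonal pointwise continuity of $y \mapsto p'_y(x)$) with the local uniform bounds above and a dominated convergence argument.
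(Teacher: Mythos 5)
Your reduction of \textbf{6.3} to the continuity of $y\mapsto\int\varphi\,d(q_y\mu)$ is correct, and your finiteness argument via the estimate $\int_K q_y\,d\mu\le\bigl(\min_{z\in K}p_z(x_0)\bigr)^{-1}p'_y(x_0)$ is sound. But the step you yourself flag as ``the main obstacle'' is a genuine gap, not a technicality: the implication ``$y\mapsto V(q_y\mu)$ continuous for the Herv\'e topology $\Rightarrow$ $y\mapsto q_y\mu$ vaguely continuous'' is not a citable form of the representation theorem of \cite[Chap.~IV]{He}, and it is essentially equivalent to what has to be proved. The difficulty is concentrated at points $y\in S(\varphi)$: there the singularity of $q_{y_n}$ sits inside the region of integration and moves with $n$, so local uniform boundedness of $q_{y_n}\mu$ on compacts \emph{avoiding} $y$ says nothing about the mass near $y$, no dominating function for $z\mapsto q_{y_n}(z)\varphi(z)$ uniform in $n$ is exhibited, and Fatou only yields $\liminf_n V^*\varphi(y_n)\ge V^*\varphi(y)$ --- the upper semicontinuity is the whole issue. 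Your fallback via Proposition \ref{prop6.1} is closer to a proof, but as stated it only controls $V^*$ applied to the particular functions $p^*_x=p_\centerdot(x)$, not to an arbitrary $\varphi\in\cal C_c^+(\Omega)$.

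The paper closes exactly this gap by a non--measure-theoretic device which you would need to supply (or replace by an explicit equi-integrability statement for the family $q_{y_n}\mu$ near $y$). Namely, fix $y_0\notin S(\varphi)$ and $\alpha>0$ with $\varphi\le\alpha p^*_{y_0}$, which is possible since $p^*_{y_0}=p_\centerdot(y_0)$ is continuous and $>0$ on the compact $S(\varphi)$. Then $\alpha p'_\centerdot(y_0)=V^*\varphi+V^*(\alpha p^*_{y_0}-\varphi)$; the left-hand side is finite and continuous on a neighborhood $U$ of $S(\varphi)$ with $y_0\notin U$ (by assertion 2 of Proposition \ref{prop6.1}, applied at the evaluation point $y_0$), and both summands on the right are l.s.c. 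A sum of two l.s.c. functions equal to a finite continuous function forces each summand to be finite and continuous, so $V^*\varphi$ is finite and continuous on $U$, while off $S(\varphi)$ it is ${}^*\cal H_2$-harmonic. This domination trick is precisely what substitutes for the missing ``Herv\'e-to-vague'' continuity in your argument.
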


\begin{proof}
Let $\varphi\in \cal C_c^+(\Omega)$, $\varphi\not\equiv 0$, and
$y\notin S(\varphi)$, the support of $\varphi$. Since the function $p_y^*$ is continuous
and $>0$ on
$S(\varphi)$, we can find $\alpha\ge 0$ such that $\varphi\le \alpha
p_y^*$, and therefore $\alpha p'_\centerdot(y)=V^*(\alpha p_y^*-\varphi)+V^*(\varphi)$.
Since the function $ p'_\centerdot (y)$ is finite and
continuous on a neighborhood $U$ of $S(\varphi)$ not containing $y$
(following Proposition \ref{prop6.1}), and that
$V^*(\alpha p_y^*-\varphi)$ and $V^*(\varphi)$ are l.s.c., then
$V^*(\varphi)$ is finite and
continuous on $U$.
On the other hand, we have $V^*\varphi(x)=\int
q_y^*(x)\varphi(y)d\mu(y)$, and therefore $V^*\varphi$ is $^*\cal
H_2$-harmonic on the complement of the support $\varphi.$
Hence $V^*\varphi$ is finite and continuous on $\Omega$.
\end{proof}

\begin{cor}
If the harmonic space $(\Omega,\cal H_2)$ satisfies the axiom of domination
then the  condition {\bf 6.3} is
satisfied.
\end{cor}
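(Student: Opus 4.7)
The plan is to reduce this corollary to a direct invocation of Proposition~\ref{prop8.1}, using Corollary~\ref{cor6.3} to supply both of its hypotheses. Recall that Proposition~\ref{prop8.1} demands (i) that each $p'_y$ be $\cal H_1$-superharmonic on $\Omega$, and (ii) that the map $y\mapsto p'_y$ be continuous from $\Omega$ into $\cal S_1^+(\Omega)$; its conclusion is precisely condition {\bf 6.3}.

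First I would invoke Corollary~\ref{cor6.3}. Under axiom D for $(\Omega,\cal H_2)$, the Green kernel $G$ is regular in the sense of the definition given just before Proposition~\ref{prop6.2}, by the equivalence recalled there (this is Hervé's characterization, \cite[Théorème 25.1, p.~522]{He}). Combined with the superharmonicity of each $p'_y$ that is built into the hypotheses of Proposition~\ref{prop6.2} and preserved in Corollary~\ref{cor6.3}, this yields at once: the continuity of $y\mapsto p'_y$ into $\cal S_1^+(\Omega)$ (hypothesis (ii) of Proposition~\ref{prop8.1}) and, a fortiori, the fact that $p'_y$ is $\cal H_1$-superharmonic for every $y\in\Omega$ (hypothesis (i)).

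With (i) and (ii) in hand, I would then apply Proposition~\ref{prop8.1} verbatim to conclude that, for every $\varphi\in\cal C_c^+(\Omega)$, the function $V^*\varphi$ is finite and continuous on $\Omega$; this is exactly condition {\bf 6.3}. The argument is essentially bookkeeping: the substantive content sits in Corollary~\ref{cor6.3} (via Propositions~\ref{prop6.1} and~\ref{prop6.2}) and in Proposition~\ref{prop8.1}. The only point that needs a moment's care is the implicit appeal to axiom D to guarantee superharmonicity rather than mere hyperharmonicity of the $p'_y$; this is tacitly handled in Corollary~\ref{cor6.3} through the regularity of $G$, and does not require any additional argument beyond what is already recorded in the earlier results of this section.
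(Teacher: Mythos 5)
Your proof follows exactly the paper's route: the paper's own proof is a one-line appeal to Proposition~\ref{prop8.1} together with Corollary~\ref{cor6.3}, which is precisely the reduction you carry out. (Both you and the paper gloss over the fact that the superharmonicity of each $p'_y$ --- hypothesis (i) of Proposition~\ref{prop8.1} --- is a \emph{hypothesis} of Proposition~\ref{prop6.2} rather than a stated conclusion of Corollary~\ref{cor6.3}, so your ``a fortiori'' inherits a bookkeeping defect already present in the paper, not a new gap of your own.)
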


\begin{proof}
The corollary results easily from the above proposition and Corollary \ref{cor6.3}.
\end{proof}

One may ask if the condition {\bf 6.3} is satisfied by $V^*$ in the case
where there are points of $\Omega$ which are not the biharmonic support
of a pure  potential. The example \ref{example4.2} shows that the
answer is not true in general. To see this (in the example \ref{example4.2}) note first that
$(\frac{1-x}{2},1)$ is  a finite  continuous $\cal
H$-superharmonic pair on $[0,1[$. For any $y\in [0,1[$  we put
$$p_y(x)=
\left\{\begin{array}{c}
\frac{1}{y}-1 \ {\rm if} \ 0\le x\le y\\
\frac{1}{x}-1 \ {\rm if} \ y\le x<1,
\end{array}
\right.
$$
and
$$q_y(x)= \left\{\begin{array}{c}
\frac{1}{y^2}-1 \ {\rm if} \ 0\le x\le y\\
\frac{1}{x^2}-1 \ {\rm if} \ y\le x<1,
\end{array}
\right.
$$
with the  convention $\frac{1}{0}=+\infty$. It is clear
that the functions $y\mapsto p_y$ and $y\mapsto q_y$ from $\Omega$
into $\cal S_1^+(\Omega)$ and from $\Omega$ into $\cal S_2^+(\Omega)$
respectively, endowed with the topology of
R.-M. Herv\'e, are continuous. Let $\mu$ be the measure on $\Omega$ defined
by $\mu(dy)=y(1-y)\lambda(dy)$, where $\lambda$ denotes the Lebesgue measure
on $[0,1[$. A simple calculation which we do not detail here  gives
$\frac{1}{2}(1-x)=\int p_y(x)d\mu(y)$. The coupling kernel of
the harmonic spaces $(\Omega,\cal H_1)$ and $(\Omega,\cal H_2)$ in this
case is given by
$$Vf(x)=\int p_y(x)f(y)d\mu(y), $$
and thus
$$V^*f(x)= \int q_x(y)f(y)d\mu(y)$$
for any function  $f\in \cal B^+(\Omega)$ and any $x\in \Omega$.
Let $\varphi$ be a nonnegative continuous on
$[0,1[$ of compact support and such that $\varphi(0)>1$. Then we can
find a real $\alpha\in ]0,1[$ such that $\varphi \ge 1$ on
$[0,\alpha]$ so we have
$$
V^*(\varphi)(0) \ge \int_0^\alpha\frac{(1-y^2)(1-y)}{y}dy=+\infty,
$$ which proves that the condition {\bf 6.3} is not satisfied. We shall prove later
(Proposition \ref{prop7.3}
) the converse of
Proposition \ref{prop8.1}, that is, if the condition
{\bf 6.3} (of course in addition of {\bf
6.1}, {\bf 6.2})  is satisfied, then the
function $p'_y$ is $\cal H_1$-superharmonic for any $y\in \Omega$
and that the map $y\mapsto p'_y$ is continuous.\\

Now we are going to couple the harmonic spaces $(\Omega,{^*\cal H_2})$ and
$(\Omega, {^*\cal H_1})$ by mean of the kernel $V^*$, so that in the  obtained biharmonic space,
the function $p'_\centerdot(y)$ is the pure hyperharmonic function of order 2
associated with the  $^*\cal H_1$-superharmonic function $p^*_y=p_\centerdot(y)$ for
any  $y\in \Omega$ as we have seen above.

\begin{theorem}\label{thm7.1}
Let $(\Omega,\cal H)$ be a strong Green biharmonic space whose  associated
harmonic spaces   $(\Omega,\cal H_1)$ and $(\Omega,\cal H_2)$
satisfy  the conditions {\bf 6.1}, {\bf 6.2} and {\bf 6.3}. Then there is a unique
(Green) biharmonic space $(\Omega,{^*\cal H})$ whose associated harmonic spaces
are respectively $(\Omega,{^*\cal H}_2)$ and $(\Omega,{^*\cal H}_1)$
and such that for any $y\in \Omega$, the pure hyperharmonic of order
2 associated with $p_y^*$ is equal to $p'_\centerdot(y)$.
\end{theorem}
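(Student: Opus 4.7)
The plan is to deduce the theorem from the biharmonic coupling construction Theorem \ref{thm3.16}, applied with the adjoint data: the harmonic sheaves $^*\cal H_2$ (in the role of $\cal H_1$) and $^*\cal H_1$ (in the role of $\cal H_2$), and the kernel $V^*$ as coupling kernel. Granted the hypotheses of Theorem \ref{thm3.16}, the existence and uniqueness of $(\Omega,{^*\cal H})$ with the right associated harmonic spaces and coupling kernel $V^*$ follow at once, and the required identification of the pure hyperharmonic function of order~2 associated with $p_y^*$ follows from part~2 of Theorem \ref{thm3.7} together with the identity \eqref{eq7.1}, which gives $V^*(p_y^*)(x)=p'_x(y)$, i.e.\ $V^*(p_y^*)=p'_\centerdot(y)$.

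So the work lies in verifying the four hypotheses of Theorem \ref{thm3.16} for the data $\bigl((\Omega,{^*\cal H_2}),(\Omega,{^*\cal H_1}),V^*\bigr)$. First, that $^*\cal H_1$ and $^*\cal H_2$ are Bauer harmonic sheaves is part of the classical theory of adjoint harmonic spaces developed by R.-M.~Herv\'e in \cite{He}[Chap.~VI]; the Green hypothesis on $(\Omega,\cal H_i)$ is essential here. Second, the base property needed by Theorem \ref{thm3.16}: condition \textbf{6.1} furnishes a base $\cal V$ of relatively compact open subsets that are $\cal H_i$-completely determining for $i=1,2$, and it is a standard fact (Herv\'e) that completely determining open subsets are $^*\cal H_i$-regular; hence $\cal V$ is a base of open subsets simultaneously $^*\cal H_2$- and $^*\cal H_1$-regular.

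Third, I would check that for every $\varphi\in \cal C_c^+(\Omega)$ the function $V^*\varphi$ is a finite continuous $^*\cal H_2$-potential on $\Omega$, $^*\cal H_2$-harmonic on $\Omega\smallsetminus S(\varphi)$. Finiteness and continuity are precisely condition \textbf{6.3}. Using the representation
\[
V^*\varphi(x)=\int q_y^*(x)\,\varphi(y)\,d\mu(y),
\]
$V^*\varphi$ is an integral of $^*\cal H_2$-potentials $q_y^*$, hence $^*\cal H_2$-superharmonic; combined with continuity and the fact that each $q_y^*$ is $^*\cal H_2$-harmonic off $\{y\}$, the pair $(^*\cal H_2$-potential, harmonic off $S(\varphi))$ then follows by a standard argument (and coincides with the adjoint version of Remark \ref{remark4.6} once continuity is in hand).

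Fourth and finally, we need a $^*\cal H_1$-potential $q>0$ with $V^*q$ a $^*\cal H_2$-potential. Here condition \textbf{6.2} enters: since $(1,1)$ is a pure finite continuous $\cal H$-potential, $1$ is a $\cal H_1$-potential, and the adjoint correspondence makes $1$ also a $^*\cal H_1$-potential; then $V^*1(x)=\int q_y^*(x)\,d\mu(y)$ is a limit of an increasing sequence of finite continuous $^*\cal H_2$-potentials $V^*\varphi_n$ (for $\varphi_n\uparrow 1$, $\varphi_n\in\cal C_c^+(\Omega)$), hence itself a $^*\cal H_2$-potential. With all hypotheses verified, Theorem \ref{thm3.16} produces the unique strong biharmonic space $(\Omega,{^*\cal H})$ with the stated associated harmonic spaces and coupling kernel $V^*$; the identification of the pure hyperharmonic function of order~2 associated with $p_y^*$ is then \eqref{eq7.1} combined with the characterisation from Theorem \ref{thm3.7}. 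The main obstacle will be the fourth point, as it is the one place where one must genuinely combine the normalisation \textbf{6.2} with the adjoint theory to obtain a usable global potential structure on the $^*\cal H_1$ side; everything else is a translation of harmonic/adjoint facts through the formula defining $V^*$.
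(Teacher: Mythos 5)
Your strategy coincides with the paper's: its entire proof consists of the assertion that $V^*$ satisfies the hypotheses of Theorem \ref{thm3.16}, followed by an appeal to \eqref{eq7.1} and part 2 of Theorem \ref{thm3.7}. Your first three verification points --- the Brelot/Bauer property of the adjoint sheaves, the $^*\cal H_i$-regularity of the completely determining sets supplied by {\bf 6.1}, and the potential property of $V^*\varphi$ obtained from {\bf 6.3} together with the representation $V^*\varphi=\int q^*_y\varphi(y)\,d\mu(y)$ --- are a correct expansion of what the paper leaves entirely implicit.

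Your fourth point, however, contains two steps that fail as stated. Condition {\bf 6.2} makes $1$ an $\cal H_1$-potential, but there is no ``adjoint correspondence'' turning $1$ into a $^*\cal H_1$-potential: the constant $1$ need not even be $^*\cal H_1$-superharmonic, since adjoint superharmonicity is governed by representations of the form $\int G(y,\cdot)\,d\sigma(y)$ rather than $\int G(\cdot,y)\,d\nu(y)$ (classically, the adjoint of an elliptic operator acquires first- and zeroth-order terms for which $1$ is not a supersolution). Moreover, even granting that, an increasing limit of finite continuous potentials need not be a potential: in the unit disc the potentials $\min(1,\log|x|/\log r)$ increase to the harmonic function $1$ as $r\uparrow 1$. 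The hypothesis of Theorem \ref{thm3.16} should instead be met by taking $q=p^*_{y_0}$ for a fixed $y_0$: this is a $^*\cal H_1$-potential, $>0$ everywhere because $\Omega$ is a connected Brelot space, and $V^*(p^*_{y_0})=p'_\centerdot(y_0)=\int q^*_z\,p_z(y_0)\,d\mu(z)$ is an integral of the extreme adjoint potentials $q^*_z$, hence a $^*\cal H_2$-potential once it is known to be finite on a dense set; that finiteness follows from Theorem \ref{thm4.3} combined with Proposition \ref{prop3.8}, since for $x\neq y_0$ outside the $\cal H_2$-polar exceptional set the pair $(p'_x,q_x)$ is biharmonic off $\{x\}$ and so $p'_x(y_0)<+\infty$. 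The paper does not carry out this verification at all, so your instinct that the fourth hypothesis is the genuine obstacle is exactly right; it just requires a different choice of $q$.
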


\begin{proof} The kernel $V^*$ satisfies the hypotheses of Theorem
\ref{thm3.16}, hence the existence (and uniqueness) of a biharmonic space $(\Omega, {^*\cal H})$ defined by
the coupling of the harmonic spaces $(\Omega,^*\cal H_2)$ and
$(\Omega,^*\cal H_1)$ by mean of this kernel. According to the
formula (6.1), for any $y\in \Omega$ the pure hyperharmonic function of order 2 associated
with the
$^*\cal H_1$-superharmonic function $p^*_y$ is equal to $p'_{\cdot}(y)$.
\end{proof}

The biharmonic space $(\Omega, {^*\cal H})$ in  Theorem
\ref{thm7.1} is called the \textit{adjoint biharmonic space}  of
$(\Omega,\cal H)$. For any open set $\omega\subset \Omega$, the pairs $(u,v)\in \cal H^*(\omega)$ are called
the \textit{adjoint biharmonic pairs} on $\omega$.

\begin{remark}\label{remark7.2}
The relatively compact completely determining open subsets  of the base $\cal V$ form a
base of open subsets which are both $^*\cal H_1$-regular and
$^*\cal H_2$-regular, hence $^*\cal H$-regular. Let $\omega$ be a
$^*\cal H$-regular (relatively compact) open subset
 and $\varphi$ a continuous function
$\ge 0$ on $\partial \omega$.
For any $x\in \omega$, the linear form
$\varphi\mapsto  V_\omega^*(^*H_\omega^1(\varphi))(x)$  on $\cal C(\partial\omega)$
defines a (Radon) measure $\tau_x^\omega$
on $\partial\omega$. Here $^*H_\omega^1(\varphi)$ is the solution of the Dirichlet problem
in $\omega$ in the harmonic
space $(\Omega,^*\cal H_1)$
for the data $\varphi$ on $\partial \omega$, and $V_{\omega}$ is
is the Borel kernel defined on $\omega$ by $V_{\omega}(f)=
V(\overline f)-\widehat R_{V(\overline f)}^{1,\Omega\smallsetminus \omega}$
for any $f\in \cal B^+(\omega)$, where $\overline f$ is the function on $\Omega$
which equals  $f$ on $\omega$ and $0$ on $\Omega\smallsetminus \omega$.
The triple of biharmonic measures of $\omega$ at the point $x$ is
$(\rho_x^\omega,\tau_x^\omega,\sigma_x^\omega)$, where
$\rho_x^\omega$ and $\sigma_x^\omega$ are respectively the harmonic
measures of $\omega$ at the point $x$ in the adjoint harmonic spaces
$(\Omega, {^*\cal H_2})$ and $(\Omega, {^*\cal H_1})$ (see
\cite[Chap. VI]{He}).
\end{remark}

Now we prove the converse of the Proposition \ref{prop8.1}:
\begin{prop}\label{prop7.3}
Suppose that there  exists a strong biharmonic space $(\Omega,\cal
G)$ whose associated harmonic spaces are $(\Omega,{^*\cal H_2})$ and
$(\Omega,{^*\cal H_1})$ and such that for any $y\in \Omega$, the
function $p'_\centerdot(y)$ is the pure hyperharmonic function of order 2
associated with $p^*_y=p_\centerdot(y)$. Then for any $y\in \Omega$, the
function $p'_y$ is $\cal H_1$-superharmonic
and the function $y\mapsto p'_y$ is continuous on $\Omega$.
\end{prop}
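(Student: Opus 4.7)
The strategy is to apply Proposition~\ref{prop3.8} within the given biharmonic space $(\Omega, \cal G)$ to each pair $\bigl({p'_y}^*, p_y^*\bigr)$, and then to translate the resulting biharmonicity into the desired properties of the family $(p'_y)_{y \in \Omega}$ using the definitional identity $p'_\centerdot(y) = {p'_y}^*$.

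Fix $y \in \Omega$. By hypothesis, $\bigl({p'_y}^*, p_y^*\bigr) = \bigl(p'_\centerdot(y), p_\centerdot(y)\bigr)$ is a pure pair in the strong biharmonic space $(\Omega, \cal G)$, whose associated Brelot harmonic spaces are ${^*\cal H_2}$ and ${^*\cal H_1}$. Because the hypothesis presents $p'_\centerdot(y)$ as a bona fide function on $\Omega$ and a nonnegative ${^*\cal H_2}$-hyperharmonic function on a Brelot space is either identically $+\infty$ on a connected component or superharmonic there, $p'_\centerdot(y)$ is in fact ${^*\cal H_2}$-superharmonic, so the pair $\bigl({p'_y}^*, p_y^*\bigr)$ lies in $\cal S^+(\Omega)$ (in the $\cal G$-sense). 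The second component $p_y^*$ is the ${^*\cal H_1}$-Green potential at $y$ and hence ${^*\cal H_1}$-harmonic on $\Omega \setminus \{y\}$. Proposition~\ref{prop3.8}, applied to $\cal G$ with $U = \Omega \setminus \{y\}$, therefore yields that the pair $\bigl({p'_y}^*, p_y^*\bigr)$ is $\cal G$-biharmonic on $\Omega \setminus \{y\}$; by Axiom~1 for $\cal G$, both of its components are continuous real-valued functions there, so in particular ${p'_y}^*$ is finite and continuous on $\Omega \setminus \{y\}$.

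Since this argument works for every $y$, it applies with $y$ replaced by an arbitrary $x \in \Omega$, giving that ${p'_x}^* = p'_\centerdot(x)$ is finite and continuous on $\Omega \setminus \{x\}$; unpacking ${p'_x}^*(y) = p'_y(x)$, this is exactly the statement that for every $x \in \Omega$ the function $y \mapsto p'_y(x)$ is finite and continuous on $\Omega \setminus \{x\}$. The two conclusions of the proposition now drop out. For any fixed $y$, $p'_y$ is $\cal H_1$-hyperharmonic by construction and is finite on the dense subset $\Omega \setminus \{y\}$, so $p'_y \in \cal S_1^+(\Omega)$. The just-established continuity of $y \mapsto p'_y(x)$ on $\Omega \setminus \{x\}$ for every $x$ is precisely condition $(2)$ of Proposition~\ref{prop6.1}, and the equivalence $(1) \Leftrightarrow (2)$ there delivers the continuity of $y \mapsto p'_y$ from $\Omega$ into $\cal S_1^+(\Omega)$. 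The most delicate point is the verification that $\bigl({p'_y}^*, p_y^*\bigr)$ belongs to $\cal S^+(\Omega)$ rather than merely to the cone of hyperharmonic pairs, which is needed to invoke Proposition~\ref{prop3.8}; this is handled by the dichotomy satisfied by nonnegative hyperharmonic functions in Brelot harmonic spaces combined with the implicit content of the hypothesis that $p'_\centerdot(y)$ is a function on $\Omega$.
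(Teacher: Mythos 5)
Your overall architecture --- apply Proposition~\ref{prop3.8} inside $(\Omega,\cal G)$ to the pure pairs $\bigl({p'_y}^*,p_y^*\bigr)$, read off finiteness and continuity of $y\mapsto p'_y(x)$ on $\Omega\smallsetminus\{x\}$, and then invoke Proposition~\ref{prop6.1} --- is the same as the paper's. The gap sits exactly at the point you yourself single out as the most delicate: you claim that ${p'_y}^*=p'_\centerdot(y)$ is ${^*\cal H_2}$-superharmonic ``because the hypothesis presents it as a bona fide function'' together with the Brelot dichotomy (identically $+\infty$ on a component, or superharmonic). The dichotomy does not decide which alternative occurs, and a $[0,+\infty]$-valued hyperharmonic function that is identically $+\infty$ is still a perfectly good function; the pure hyperharmonic function of order $2$ is defined as $\widehat{\inf}\,\cal U_0(v)$ and is always defined, possibly $\equiv+\infty$, so nothing in the hypothesis excludes $p'_\centerdot(y)\equiv+\infty$. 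This is not a formality: the whole point of Section~\ref{section7} is that pure hyperharmonic functions of order $2$ can be identically $+\infty$ (e.g.\ the one associated with $q_0$ in Example~\ref{example4.2}). If that case occurs, the pair is not in $\cal S^+(\Omega)$, Proposition~\ref{prop3.8} does not apply, and your proof stops.

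The paper closes this gap with a genuine argument that your proposal is missing. Let $N$ be the set of $y$ for which the pure hyperharmonic function of order $2$ associated with $p_y^*$ in $(\Omega,\cal G)$ (that is, $p'_\centerdot(y)$) is identically $+\infty$; Theorem~\ref{thm4.3}, applied to the strong biharmonic space $(\Omega,\cal G)$, shows that $N$ is ${^*\cal H_1}$-polar, hence has nonempty complement. If some $p'_{y}$ were identically $+\infty$, then ${p'_x}^*(y)=p'_y(x)=+\infty$ for every $x$; but for $x\in\Omega\smallsetminus(N\cup\{y\})$ the pair $(p'_\centerdot(x),p_x^*)$ is superharmonic, hence by Proposition~\ref{prop3.8} $\cal G$-biharmonic on $\Omega\smallsetminus\{x\}$, so ${p'_x}^*(y)<+\infty$ --- a contradiction. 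Some argument of this kind, resting on the polarity of the exceptional set, is needed before the rest of your proof can run; once it is in place, the remainder of what you wrote is essentially the paper's proof.
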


\begin{proof} Suppose that there exists a strong biharmonic space
$(\Omega,\cal G)$ whose associated  harmonic spaces  are
$(\Omega,{^*\cal H_2})$ and $(\Omega,{^*\cal H_1})$ and such that
for any  $y\in \Omega$, the function $p'_\centerdot(y)$ is the pure
hyperharmonic  function of order 2 associated with $p^*_y=p_\centerdot(y)$.
Assume that there exists $y\in \Omega$ such that $p'_y\equiv +\infty$ on
$\Omega$, then we have $p'^*_x(y)=p'_y(x)=+\infty$ for any $x\in
\Omega\smallsetminus (N\cup \{y\})$,
but this is absurd according to the
Theorem \ref{thm4.3}, since for  such $x$ we have
$p'_x(y)<+\infty$ (because the pair $(p'_\centerdot(x), p_x^*)$ is $\cal
G$-biharmonic on $\Omega\smallsetminus \{x\}$), where $N$ is the  $^*\cal
H_1$-polar subset of points $y$ of $\Omega$ such that the pure
hyperharmonic function of order 2 associated with $p^*_y$ is identically equal to $+\infty$.
Now let $x\in \Omega$, then the pair $(p'_\centerdot(x), q^*)$ is
$\cal H$-harmonic on $\Omega\smallsetminus \{x\}$ according to
Proposition \ref{prop3.8}, and hence the function $p'_\centerdot(x)$ is continuous
on $\Omega\smallsetminus \{x\}$. Let $z\in \Omega\smallsetminus\{x\}$ and
$\cal F$ an ultrafilter on $\Omega$ finer than the filter
of neighborhoods of $z$, then
we have $\lim\widehat{\inf}_{\cal F}p'_y(x)=p'_z(x)$,
and it follows that this equality holds also for $z=x$
(indeed, if $u$ and $v$ are $\cal K$-hyperharmonic on
$X$ in a harmonic space
$(X,\cal K)$ such that $u=v$ on $X\smallsetminus\{x\}$ for some
point $x\in X$, then $u=v$).
It follows then that the function $y\mapsto p'_y$ is continuous on $\Omega$.
The proposition is proved.
\end{proof}

Now we proceed to establish a converse of Theorem \ref{thm7.1}, that
is, the existence of an adjoint   biharmonic space $(\Omega,\cal H^*)$
associated with the  biharmonic space $(\Omega,\cal H)$ satisfying the
hypotheses of Theorem \ref{thm7.1} and the property that for any $y\in \Omega$, the pure
hyperharmonic function of order 2 associated with $p_y^*$ is
$p'_\centerdot(y)$, implies that the coupling kernel of $(\Omega,\cal G)$
is $V^*$, and hence $V^*$ necessarily satisfies the condition
{\bf 6.3}.

\begin{theorem}\label{thm7.4}
Suppose that
there exists a biharmonic space $(\Omega,\cal G)$ whose
associated harmonic spaces are respectively $(\Omega,{^*\cal H_2})$
and $(\Omega,{^*\cal H_1})$, such that, for any $y\in \Omega$, the pure
hyperharmonic function of order 2 associated with $p_y^*$ is equal to
$p'_\centerdot(y)$. Then the coupling kernel of $(\Omega,\cal G)$ is $V^*$.
\end{theorem}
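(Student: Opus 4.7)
The plan is to identify the coupling kernel $W$ of $(\Omega, \cal G)$ with $V^*$ by representing $W$ via Proposition~\ref{prop3.9} and then identifying the representing measure with $\mu$. Since the first associated harmonic space of $\cal G$ is $(\Omega, {^*\cal H_2})$, whose extremal potentials with harmonic support $\{y\}$ are $q_y^* = q_\centerdot(y)$, Proposition~\ref{prop3.9} applied to $\cal G$ furnishes a unique nonnegative Radon measure $\nu$ on $\Omega$ such that
$$Wf(x) = \int q_y^*(x)\, f(y)\, d\nu(y) = \int q_x(y)\, f(y)\, d\nu(y),\qquad \forall f\in\cal B^+(\Omega),\ x\in\Omega.$$
As $V^*f(x) = \int q_x(y)\, f(y)\, d\mu(y)$, the conclusion $W = V^*$ reduces to proving $\nu = \mu$.

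The hypothesis combined with Theorem~\ref{thm3.7} applied to $\cal G$ gives $W(p_y^*) = p'_\centerdot(y)$ for every $y\in\Omega$. Evaluating at an arbitrary $x$ and using formula~\eqref{eq7.1} for $V^*(p_y^*)(x) = p'_x(y)$, this becomes
$$\int q_x(z)\, p_z(y)\, d\nu(z) = p'_x(y) = \int q_x(z)\, p_z(y)\, d\mu(z), \qquad \forall x,y\in\Omega.$$
Setting $\sigma := \nu - \mu$ and, for each fixed $y\in\Omega$, $\tau_y(dz) := p_y^*(z)\, \sigma(dz)$, the identity above reads
$$\int q_z^*(x)\, d\tau_y(z) = 0,\qquad \forall x\in\Omega,$$
since $q_x(z) = q_z^*(x)$. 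In other words, the $^*\cal H_2$-potential of the signed measure $\tau_y$ in the Green harmonic space $(\Omega, {^*\cal H_2})$ vanishes identically.

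Proposition~\ref{prop3.8} applied to the pure pair $(p'_\centerdot(y), p_y^*)$ in $\cal G$ shows this pair is $\cal G$-biharmonic on $\Omega\smallsetminus\{y\}$, so $p'_x(y) < +\infty$ for $x\ne y$. Consequently the $^*\cal H_2$-potentials of the Hahn components $\tau_y^\pm$ are both finite and agree pointwise, and Hervé's uniqueness of integral representation of potentials in the (Green) adjoint harmonic space $(\Omega, {^*\cal H_2})$ forces $\tau_y^+ = \tau_y^-$, i.e.\ $p_y^*\,\sigma = 0$. Because $p_y^*(z) = p_z(y) > 0$ for every $z\in\Omega\smallsetminus\{y\}$, this forces $\sigma$ to be concentrated on $\{y\}$. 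Letting $y$ range over all of $\Omega$ yields $\sigma = 0$, hence $\nu = \mu$ and $W = V^*$.

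The main difficulty lies in the signed-measure uniqueness step: one must ensure the Hahn components $\tau_y^\pm$ have finite $^*\cal H_2$-potentials before invoking uniqueness of representing measures, which is precisely where Proposition~\ref{prop3.8} is essential (guaranteeing finiteness of $p'_x(y)$ off the diagonal). Everything else is a straightforward combination of the integral form of the coupling kernel (Proposition~\ref{prop3.9}) and the computation of $V^*(p_y^*)$ via formula~\eqref{eq7.1}.
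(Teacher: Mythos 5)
Your route is genuinely different from the paper's. The paper does not represent $W$ by a measure at all: it picks a single strictly positive ${}^*\cal H_1$-potential $p$ with $Wp$ finite and continuous (Corollary \ref{cor3.6}), writes $p=\int p^*_y\,d\mu(y)$ by Herv\'e's representation theorem, obtains $Wp=V^*p$ from $W(p^*_y)=V^*(p^*_y)=p'_\centerdot(y)$ by Fubini, and then bootstraps ($\varphi\le\alpha p$ gives $V^*\varphi$ finite continuous and ${}^*\cal H_2$-harmonic off $S(\varphi)$, then $Wv=\sup_nW(v\wedge np)=V^*v$) before invoking the uniqueness clause of Theorem \ref{thm3.7}. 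You instead represent $W$ itself via Proposition \ref{prop3.9} applied to $(\Omega,\cal G)$ and reduce everything to the identification $\nu=\mu$ of representing measures. Your reduction is cleaner and isolates where Herv\'e's uniqueness is used, but it quietly assumes what the paper's bootstrap is designed to supply: that Proposition \ref{prop3.9} is applicable to $\cal G$ at all, i.e.\ that $\cal G$ is strong and that its coupling kernel maps $\cal C_c^+(\Omega)$ into finite continuous ${}^*\cal H_2$-potentials (the analogue of condition {\bf 6.3} for $W$); you should at least record why these hold (the paper itself only asserts strongness of $\cal G$).

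The one genuine gap is in the finiteness step. You apply Proposition \ref{prop3.8} to the pair $(p'_\centerdot(y),p_y^*)$ to get $p'_x(y)<+\infty$ for $x\ne y$, but Proposition \ref{prop3.8} requires the pair to lie in $\cal S^+(\Omega)$, i.e.\ $p'_\centerdot(y)$ must already be finite on a dense set. This is not automatic: a pure hyperharmonic function of order 2 associated with a point potential can be identically $+\infty$ (that is exactly the pathology of Example 5.4 and the reason Theorem \ref{thm4.3} is needed). Under the hypotheses of the theorem this is repaired either by citing Proposition \ref{prop7.3} (same hypotheses, proved just before) or by repeating its argument: by Theorem \ref{thm4.3} applied to $\cal G$, the set $N$ of $y$ with $p'_\centerdot(y)\equiv+\infty$ is ${}^*\cal H_1$-polar, so your argument yields $p_y^*\sigma=0$ for all $y\notin N$, and two such $y$ already force $\sigma=0$. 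A second, minor point: the identity $\int q_z^*(x)\,d\tau_y(z)=0$ is only legitimate a priori for $x\ne y$ (at $x=y$ both integrals may be $+\infty$ and their difference is undefined); this is harmless since two nonnegative ${}^*\cal H_2$-superharmonic functions agreeing off a single point agree everywhere, but it should be said. With these repairs your proof is correct.
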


\begin{proof} Let $W$ be the coupling kernel of $(\Omega,\cal G)$.
According to the hypotheses of the theorem,
 the biharmonic space
$(\Omega,\cal G)$ is strong. For any $y\in \Omega$, we have
$V^*(p_y^*)=W(p_y^*)=p'_\centerdot(y)$ by definition of $V^*$ and
$W$. Let $p$ be a $^*\cal H_1$-potential $>0$ on
$\Omega$ such that $Wp$ is finite and continuous on $\Omega$ (such a potential exists by
Corollary \ref{cor3.6}).
According to \cite[Theorem 18.2, p. 482]{He}, there
exists a measure $\mu$ on $\Omega$ such that $p=\int p_\centerdot(y)d\mu(y)$. Then
by applying twice the Fubini theorem,  we have for any $x\in
\Omega$,
\begin{eqnarray*}
Wp(x)&=&\int p(z)W(x,dz)
=\int\int p^*_y(z)d\mu(y)W(x,dz)\\
&=& \int\int p^*_y(z)W(x,dz)d\mu(y)
= \int W(p^*_y)(x)d\mu(y)\\
&=& \int V^*(p^*_y)(x)d\mu(y)
= V^*p(x),
\end{eqnarray*}
and hence $Wp=V^*p$. Since $W$ satisfies the condition 2 of Theorem \ref{thm3.7}, we
deduce that $V^*$ also satisfies this condition. Indeed, let
$\varphi\in \cal C_c^+(\Omega)$
$\alpha
>0$ such that $\varphi \le \alpha p$. Then $\alpha
V^*p=V^*(\varphi)+V^*(\alpha p-\varphi)$. The functions
$V^*(\varphi)$ and $V^*(\alpha p-\varphi)$ are both finite and
l.s.c. and $V^*p$ is finite and continuous on $\Omega$,
then $V^*(\varphi)$ is finite and continuous. Moreover,
by the definition of the kernel $V^*$, the function $V^*\varphi$ is
$^*\cal H_2$-harmonic in $\Omega\smallsetminus S(\varphi)$. Let $v\in
{^*\cal H_1^{*+}}(\Omega)$,
then according to the above results we have $Wv=\sup_nW(v\wedge
np)=\sup_nV^*(v\wedge np)=V^*v$.
It therefore follows from  Theorem \ref{thm3.7} that $W=V^*$.
\end{proof}

We have at the same time proved (under the assumptions of
Theorem \ref{thm7.4}) the following result:

\begin{cor}
Under the hypotheses of the previous theorem,
for any $\varphi\in \cal C_c^+(\Omega)$, the function
$V^*\varphi$ is a finite and continuous $^*\cal H_2$-potential on $\Omega$.
\end{cor}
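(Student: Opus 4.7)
The plan is to assemble the conclusion from facts already established in, or immediately implied by, the proof of Theorem~\ref{thm7.4}. Fix $\varphi\in\cal C_c^+(\Omega)$.

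First, I would record the finiteness and continuity of $V^*\varphi$: this argument is already contained in the proof of Theorem~\ref{thm7.4}. One picks a $^*\cal H_1$-potential $p>0$ with $V^*p=Wp$ finite and continuous (provided by Corollary~\ref{cor3.6} applied to $(\Omega,\cal G)$), chooses $\alpha>0$ with $\varphi\le\alpha p$, and writes
\[
\alpha V^*p \;=\; V^*\varphi + V^*(\alpha p-\varphi).
\]
Since both summands on the right are nonnegative and lower semicontinuous while the left-hand side is finite and continuous, each summand must be finite and continuous too; in particular so is $V^*\varphi$.

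Second, I would observe that the integral representation $V^*\varphi(x)=\int q_x(y)\varphi(y)\,d\mu(y)$, combined with the fact that $x\mapsto q_x(y)=q^*_y(x)$ is $^*\cal H_2$-harmonic on $\Omega\smallsetminus\{y\}$ (by the very definition of the adjoint), yields, by the usual argument of integrating in $y$ over the compact set $S(\varphi)$ while $x$ ranges in $\Omega\smallsetminus S(\varphi)$, that $V^*\varphi$ is $^*\cal H_2$-harmonic on $\Omega\smallsetminus S(\varphi)$.

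Finally, to upgrade from ``$^*\cal H_2$-superharmonic, harmonic off $S(\varphi)$'' to genuine ``$^*\cal H_2$-potential'', I would invoke Theorem~\ref{thm7.4} itself: it asserts that the coupling kernel $W$ of the biharmonic space $(\Omega,\cal G)$ equals $V^*$, and the first associated harmonic space of $(\Omega,\cal G)$ is $(\Omega,{^*\cal H_2})$. Property~1 of Theorem~\ref{thm3.7} applied to $(\Omega,\cal G)$ then says precisely that $W\varphi=V^*\varphi$ is a $^*\cal H_2$-potential on $\Omega$. There is essentially no obstacle here; the corollary is a repackaging of steps internal to the proof of Theorem~\ref{thm7.4}, made explicit because the decomposition $\alpha V^*p = V^*\varphi + V^*(\alpha p-\varphi)$ is the natural source of the continuity assertion that Theorem~\ref{thm7.4} itself does not separately record.
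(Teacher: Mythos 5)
Your proposal is correct and follows the paper's intent exactly: the paper presents this corollary with the remark that it was ``at the same time proved'' inside the proof of Theorem~\ref{thm7.4}, and the three ingredients you extract (the decomposition $\alpha V^*p=V^*\varphi+V^*(\alpha p-\varphi)$ for finiteness and continuity, the integral representation for $^*\cal H_2$-harmonicity off $S(\varphi)$, and the identification $W=V^*$ together with property~1 of Theorem~\ref{thm3.7} applied to $(\Omega,\cal G)$ for the potential property) are precisely the steps the paper relies on.
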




\section{Application to  the study  of the regularity of the biharmonic Green kernel}\label{section9}

We keep the notations of the previous sections. Let
$(\Omega,\cal H)$ be a strong Green biharmonic space.
For any $y\in \Omega$, we consider an
$\cal H_2$-potential $q_y$ of harmonic support $\{y\}$ such that the
function $y\mapsto q_y$ from $\Omega$ into $\cal
S_1^+(\Omega)$ (endowed with the topology of R.-M. Herv\'e) is continuous
(cf. \cite[Theorem 18.1 and Proposition 18.1, p. 479-480]{He}).
Suppose that, for any $y\in \Omega$, the function $p'_y$ is
$\cal H_1$-superharmonic on $\Omega$,  and that the map
$y\mapsto p'_y$ is continuous from $\Omega$ into $\cal S_1^+(\Omega)$ (hence
for any $\varphi\in \cal C_c^+(\Omega)$, the function
$V^*\varphi$ is a finite and continuous $^*\cal H_2$-potential).
Put $H(x,y)=p'_y(x)$ for
any pair $(x,y)\in \Omega^2$. The kernel $H$ was called the biharmonic
Green kernel of the biharmonic space $(\Omega, \cal H)$ in the Section \ref{section2}.

\begin{theorem}\label{thm7.2} Suppose that the conditions ({\bf 6.1}) and ({\bf 6.2})
are satisfied and
that for any $y\in \Omega$, the function
$p'_y$ is $\cal H_1$-superharmonic on $\Omega$
and, for any $x\in \Omega$, the function $y\mapsto p'_y(x)$
is continuous on $\Omega\smallsetminus \{x\}$. Then the kernel $H$ has the following properties

{\rm 1.} $H$ is l.s.c. on $\Omega\times \Omega$.

{\rm 2.} $H$ is continuous on $\Omega\times \Omega\smallsetminus \Delta$, where
$\Delta=\{(x,x): x\in \Omega\}$ is the diagonal of $\Omega\times
\Omega$.
\end{theorem}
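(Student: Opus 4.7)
The plan is to exploit the integral representation of $H$ coming from the coupling kernel $V$. Under assumption {\bf 6.2}, Theorem \ref{thm3.7} gives $V(q_y)=p'_y$ for every $y\in\Omega$, and Proposition \ref{prop3.9} supplies a positive Radon measure $\mu$ on $\Omega$ for which
\[
H(x,y)\;=\;p'_y(x)\;=\;\int_\Omega p_z(x)\,q_y(z)\,d\mu(z).
\]

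For assertion (1), since $(\Omega,\cal H_1)$ and $(\Omega,\cal H_2)$ are Green harmonic spaces, the Green kernels $(x,z)\mapsto p_z(x)$ and $(z,y)\mapsto q_y(z)$ are nonnegative and l.s.c.\ on $\Omega\times\Omega$. Viewed as functions on $\Omega^3$ (each independent of the remaining variable), their product $(x,y,z)\mapsto p_z(x)\,q_y(z)$ is therefore nonnegative and l.s.c.\ on $\Omega^3$. Invoking the standard fact that integration of a nonnegative l.s.c.\ function against a positive Radon measure produces an l.s.c.\ function of the remaining variables, one obtains that $H$ is l.s.c.\ on $\Omega\times\Omega$.

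For assertion (2), fix $(x_0,y_0)\in\Omega^2$ with $x_0\neq y_0$ and let $(x_n,y_n)\to(x_0,y_0)$. Part (1) already gives $\liminf_n H(x_n,y_n)\geq H(x_0,y_0)$, so only the reverse inequality remains. I would proceed by a subsequence argument: it suffices to exhibit, for every subsequence, a further subsequence along which $H(x_n,y_n)\to H(x_0,y_0)$. By Proposition \ref{prop6.1} the hypothesis is equivalent to $y\mapsto p'_y$ being continuous from $\Omega$ into $\cal S_1^+(\Omega)$ in the topology of R.-M.\ Herv\'e, so $p'_{y_n}\to p'_{y_0}$ in this topology. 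Mimicking the proof of Proposition \ref{prop6.1}, I would apply Proposition~1.8 of \cite{S6} together with Ascoli--Arzela's theorem to the sequence of $\cal H$-superharmonic pairs $((p'_{y_{n_k}},q_{y_{n_k}}))$ to extract a sub-subsequence converging locally uniformly on $\Omega\smallsetminus\{y_0\}$ to an $\cal H$-superharmonic pair $(p,q_{y_0})$ with $p$ continuous on $\Omega\smallsetminus\{y_0\}$. The pointwise hypothesis at $x_0\in\Omega\smallsetminus\{y_0\}$ identifies $p(x_0)=p'_{y_0}(x_0)$, and the local uniform convergence near $x_0$ then yields $p'_{y_{n_k}}(x_{n_k})\to p(x_0)=p'_{y_0}(x_0)$, which is the desired convergence along the extracted subsequence.

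The main obstacle is (2): the integral representation used in (1) does not by itself furnish upper semicontinuity, because controlling $q_{y_n}$ uniformly in a neighborhood of its (moving) singular support is delicate. The essential input is the passage from R.-M.\ Herv\'e convergence to local uniform convergence off the singularity, which rests on the compactness result from \cite{S6} combined with Ascoli--Arzela.
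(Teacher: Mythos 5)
Your proof of assertion (1) is the same as the paper's: both write $H(x,y)=\int p_z(x)\,q_y(z)\,d\mu(z)$ via Proposition \ref{prop3.9} and invoke Fatou's lemma. For assertion (2), however, you take a genuinely different route. The paper first builds the adjoint biharmonic space $(\Omega,{^*\cal H})$ (this is where conditions {\bf 6.1}--{\bf 6.2} and Proposition \ref{prop8.1} enter, to verify {\bf 6.3} and apply Theorem \ref{thm3.16}); it then observes that the pairs $(H(x,\cdot),q_\cdot(x))$ are $^*\cal H$-biharmonic and the pairs $(H(\cdot,y_n),q_{y_n})$ are $\cal H$-biharmonic near $x$, applies the equicontinuity theorem of \cite{EK1} in each of the two spaces to get equicontinuity separately in each variable uniformly in $n$, and concludes by the triangle inequality. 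You instead avoid the adjoint space altogether: you upgrade the hypothesis to Herv\'e-continuity of $y\mapsto p'_y$ via Proposition \ref{prop6.1}, and then rerun the compactness argument from the proof of that proposition (Proposition 1.8 of \cite{S6} plus Ascoli--Arzel\`a) to extract a sub-subsequence of $(p'_{y_{n_k}},q_{y_{n_k}})$ converging locally uniformly near $x_0$, identifying the limit at $x_0$ by the pointwise continuity hypothesis; the subsequence principle then gives joint continuity. This is sound --- the only inputs are the same Harnack-type compactness for biharmonic pairs already used in Proposition \ref{prop6.1}, boundedness at a point (supplied by the hypothesis), and the fact that for $k$ large the pairs are biharmonic on a fixed neighborhood of $x_0$ avoiding $y_0$. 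What each approach buys: yours is more self-contained and does not visibly use condition {\bf 6.1} (nor the adjoint construction) in part (2); the paper's argument is symmetric in the two variables, simultaneously yields the regularity of the adjoint kernel $H^*$, and fits the paper's broader program of exploiting $(\Omega,{^*\cal H})$.
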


\begin{proof}  We  may suppose that the pair $(1,1)$ is $\cal
H$-superharmonic. The hypotheses of Theorem \ref{thm3.16} are satisfied,
and thus the adjoint biharmonic space $(\Omega,^*\cal H)$ is well defined.

1. We have $H(x,y)=\int p_z(x)q_y(z)d\mu(z)$ for any
pair $(x,y)\in \Omega^2$. On the other hand, for any $z\in \Omega$,
the functions $(x,y)\mapsto p_z(x)$ and $(x,y) \mapsto q_y(z)$ are
l.s.c. on $\Omega\times \Omega$, then it follows  from the   Fatou
lemma that the function $H$ is l.s.c. on $\Omega\times \Omega$.

2. Let  $((x_n,y_n))$ be a sequence of points of $\Omega\times
\Omega\smallsetminus \Delta$ which converges to $(x,y)\in \Omega\times
\Omega\smallsetminus \Delta$, and  $\omega$ an open neighborhood of
$x$ such that $y\notin{\overline \omega}$. We can obviously assume
that $y_n\notin {\overline \omega}$ for any integer $n$. The pairs $(H(x,\centerdot), q_\centerdot(x))$
are ${^*\cal H}$-biharmonic, hence continuous on $\Omega\smallsetminus\{x\}$.
Thus the sequences $(H(x,y_n))$ and $(q_{y_n}(x))$ are bounded. On the other hand, the pairs
$(H(\centerdot,y_n), q_{y_n})$, $n\in \NN$, are  biharmonic on $\omega$, then,
according to  \cite[Theorem 3.2, p. 586]{EK1}, the sequences of
functions $(H(\centerdot,y_n))_{n\in \NN}$, $(q_{y_n})_{n\in \NN}$, are
equicontinuous at  $x$. Let $\epsilon>0$, we can find a
neighborhood $W$ of $x$ such that $W\subset \omega$ and
$|H(\xi,y_n)-H(x,y_n)|<\epsilon$  for  any $\xi\in W$ and any $n\in
\NN$. By applying the same arguments in he biharmonic space
$(\Omega,^*\cal H)$ to the kernel $H^*$
(defined by $H^*(x,y)=H(y,x)$), there is
an open neighborhood $W'$ of $y$ such that $|H(x_n,
\xi)-H(x_n,y)|<\epsilon$ for any $\xi\in W'$ and any $n\in \NN$.
Furthermore, there exists an integer $n_0\in \NN$ such that
$x_n\in W$ and $y_n\in W'$ for any $n\ge n_0$. Thus
$$|H(x_n,y_n)-H(x,y)|\le
|H(x_n,y_n)-H(x_n,y)|+|H(x_n,y)-H(x,y)|<2\epsilon.$$
for every $n\ge n_0$. We then deduce that
the sequence $(H(x_n,y_n))$ converges to $H(x,y)$. Hence the
function $H$ is continuous on $\Omega\times \Omega\smallsetminus \Delta$.
\end{proof}

\thebibliography{99}

\bibitem{Al} E.M. Alfsen,  \textit{Compact Convex Sets and Boundary Integrals},
Ergebnisse der Math. Vol. 57, Springer, Berlin, 2001.

\bibitem{BB} N. Boboc, Gh. Bucur, \textit{Perturbations in excessive structures. Complex analysis}--\textit{fifth Romanian-Finnish seminar, Part 2} (Bucharest, 1981), 155--187, Lecture Notes in Math., 1014, Springer, Berlin, 1983.

\bibitem{B} N. Bouleau,  \textit{Espaces biharmoniques, syst\`emes d'\'equations diff\'erentielles
et couplage de processus de Markov}, J. Math. Pures Appl. 59 (1980), 187-240.

\bibitem{B2} N. Bouleau,  \textit{ Application de la th\'eorie markovienne du potentiel `a l'´\'etude de fonctions
biharmoniques et de certains syst\`mes diff\'erentiels.
et couplage de processus de Markov}, Th\`ese d'Etat de l'Universit\'e Paris VI (Mars 1980).

\bibitem{Bouk} A. Boukricha,  \textit{Espaces biharmoniques}.  Th\'eorie du potentiel (Orsay, 1983), 116--148,
Lecture Notes in Math., 1096, Springer, Berlin, 1984.

\bibitem{BBC} Boboc, N., Bucur, Gh., Cornea, A.: \textit{Order and
Convexity in Potential Theory: H-Cones}, Lecture Notes in Math. 853,
Springer, Berlin, 1981.

\bibitem{Br} Brelot, M.: Lectures on Potential theory,  2d Edition,
Tata Institute of Fundamental Research,
Bombay,
1967.

\bibitem{CC}  C. Constantinescu,  A. Cornea, \textit{ Potential Theory on
Harmonic spaces}, Springer Verlag Heidelberg, 1972.

\bibitem{EK0} M. El Kadiri, \textit{Repr\'esentation int\'egrale dans le cadre
de la th\'eorie axiomatique des
fonctions biharmoniques}, Th\`ese de 3e cycle, Rabat, 1986.

\bibitem{EK1} M. El Kadiri, \textit{Repr\'esentation int\'egrale en th\'eorie des
fonctions biharmoniques}, Rev. Roumaine Math. Pures Appl., Tome XLII, nos 7-8 (1997), 579--589.

\bibitem{EK2} M. El Kadiri, \textit{Fronti\`ere de Martin biharmonique et
repr\'esentation int\'egrale des fonctions biharmoniques positives}, Positivity 6, no 2 (2002), 129--145.

\bibitem{EK3} M. El Kadiri, S. Haddad, \textit{Remarques sur
la fronti\`ere de Martin biharmonique et
repr\'esentation int\'egrale des fonctions biharmoniques positives},
Int. J. Math. Math. Sci., no. 9 (2005), 1461--1472.

\bibitem {H} L. L. Helms,  \textit{Introduction to potential theory}, Pure and
Applied Mathematics, Vol. XXII, Wiley-Interscience,
New York-London-Sydney 1969.

\bibitem{He} R-M. Herv\'e, \textit{Recherches axiomatiques sur
la th\'eorie des fonctions surharmoniques et du potentiel}.  Ann. Inst. Fourier (Grenoble) 12 (1962), 415--571.

\bibitem{Me} P-A. Meyer,  \textit{Brelot's axiomatic theory
of the Dirichlet problem and Hunt's theory.} Ann. Inst. Fourier (Grenoble) 13,  fasic. 2 (1963),  357--372.

\bibitem{S1} E.P. Smyrnelis, \textit{Axiomatique des fonctions biharmoniques}, Section 1, Ann. Inst. Fourier
25 (1975), 35-97.

\bibitem{S2} E.P. Smyrnelis, \textit{Axiomatique des fonctions biharmoniques}, Section 2, Ann. Inst. Fourier
26 (1976), 1-47.

\bibitem{S3} E.P. Smyrnelis, \textit{Repr\'esentation int\'egrale dans les espaces
biharmoniques}, Acad. Roy. Belg. Bull. Cl. Sci. 71 (1985), 383-394.

\bibitem{S6} E.P. Smyrnelis, \textit{Harnack's properties of biharmonic functions},
Comment. Math. Univ. Carolin. 33 (1992), no. 2, 299--302.

\bibitem{S4} E.P. Smyrnelis, \textit{Couples de Green}, Acad. Roy. Belg. Bull. Cl. Sci. 12 (2002), 319-326.

\bibitem{S5} E.P. Smyrnelis, \textit{Fonctions biharmoniques adjointes}, Ann. Pol. Math. 99, no 1 (2010), 1-21.

\end{document}